\begin{document}

\newtheorem{theorem}{{\bf Theorem}}[section]
\newtheorem{fact}[theorem]{{\bf Fact}}
\newtheorem{lemma}[theorem]{{\bf Lemma}}
\newtheorem{sublemma}[theorem]{{\bf Sublemma}}
\newtheorem{proposition}[theorem]{{\bf Proposition}}
\newtheorem{corollary}[theorem]{{\bf Corollary}}
\newtheorem{definition}[theorem]{{\bf Definition}}
\newtheorem{notation}[theorem]{{\bf Notation}}
\newtheorem{convention}{{\bf Convention}}
\newtheorem{case}{{\bf Case}}
\newtheorem{subcase}{{\bf Case 1.\hspace{-0.2em}}}
\newtheorem{subcase2}{{\bf Case 2.\hspace{-0.2em}}}
\newtheorem{problem}{{\bf Problem}}
\newenvironment{proof}{{\bf Proof }}{\hfill$\square$}
\newtheorem{remark}[theorem]{{\bf Remark}}%[section]
\newtheorem{claim}{{\bf Claim}}%[section]
\newcommand\Z{{\mathbb Z}}
\newcommand\N{{\mathbb N}}
\newcommand\Q{{\mathbb Q}}
\newcommand\R{{\mathbb R}}
\newcommand\diff{{\rm Diff}}
\newcommand\sym{{\rm Sym}}
\newcommand\fix{{\rm Fix}}
\newcommand\out{{\rm Out}}
\newcommand\aut{{\rm Aut}}
\newcommand\inn{{\rm Inn}}
\newcommand\id{{\rm id}}
\newcommand\K{{\mathcal K}}
\newcommand\T{{\mathcal T}}
\newcommand\LL{{\mathcal L}}
\newcommand\D{{\mathcal D}}
\newcommand\mcg{{\mathcal M}}
\newcommand\pmcg{{\mathcal PM}}
\newcommand\isom{{\rm Isom}}
\newcommand\interior{{\rm Int}}
\newcommand\cl{{\rm cl}}
\relpenalty=10000
\binoppenalty=10000
\uchyph=-1

\makeatletter
\def\tbcaption{\def\@captype{table}\caption}
\def\figcaption{\def\@captype{figure}\caption}
\makeatother

\title{Characterization of 3-bridge links with infinitely many 3-bridge spheres} 
\author{Yeonhee Jang
\footnote{This research is partially supported by Grant-in-Aid for JSPS Research Fellowships for Young Scientists.}\\
Department of Mathematics, Hiroshima University, \\
Hiroshima 739-8526, Japan\\
yeonheejang@hiroshima-u.ac.jp}

\date{\empty}
\maketitle

\begin{abstract}
In \cite{Jan}, the author constructed an infinite family of 3-bridge links 
each of which admits infinitely many 3-bridge spheres up to isotopy.
In this paper, we prove that if a prime, unsplittable link $L$ in $S^3$ admits infinitely many 3-bridge spheres up to isotopy
then $L$ belongs to the family.
\end{abstract}

%\begin{keyword}
%3-bridge links \sep bridge spheres \sep Heegaard splittings 

%% MSC codes here, in the form: \MSC code \sep code
%% or \MSC[2008] code \sep code (2000 is the default)

%\end{keyword}

%\end{frontmatter}

%%%%%(Introduction)%%%%%%%%%%%%%%%%%%%%%%%%%%%%%%%%%%%%%%%%%%%%%%%%%%%%%%%%%%%%%%%%%%%%%%%

\section{Introduction}\label{intro}

An {\it $n$-bridge sphere} of a link $L$ in $S^3$
is a 2-sphere which meets $L$ in $2n$ points
and cuts $(S^3, L)$ into $n$-string trivial tangles
$(B_1, t_1)$ and $(B_2, t_2)$. 
Here, an {\it $n$-string trivial tangle} is 
a pair $(B^3, t)$ of the $3$-ball $B^3$ and 
$n$ arcs properly embedded in $B^3$ parallel to the boundary of $B^3$. 
We call a link $L$ an {\it $n$-bridge link} 
if $L$ admits an $n$-bridge sphere and does not admit an ($n-1$)-bridge sphere.
Two $n$-bridge spheres $S_1$ and $S_2$ of $L$ are said to be {\it pairwise isotopic} 
({\it isotopic}, in brief)
if there exists a homeomorphism $f:(S^3, L)\rightarrow (S^3, L)$
such that $f(S_1)=S_2$ and 
$f$ is {\it pairwise isotopic} to the identity,
i.e., there is a continuous family of homeomorphisms 
$f_t:(S^3, L)\rightarrow (S^3, L)$ $(0\leq t\leq 1)$
such that $f_0=f$ and $f_1=\id$.

It is known
by Otal (\cite{Ota1} and \cite{Ota2})
that the unknot (resp. any 2-bridge link)
admits a unique $n$-bridge sphere up to isotopy 
for $n\geq 1$ (resp. $n\geq 2$).
These results were recently refined 
by Scharlemann and Tomova \cite{Sch2}.
The author constructed an infinite family of links 
each of which admits infinitely many 3-bridge spheres up to isotopy in \cite{Jan},
and gave a classification of 3-bridge spheres of 3-bridge arborescent links in \cite{Jan3}.
In this paper, we prove the following theorem.

\begin{theorem}\label{thm-main}
Let $L$ be a prime, unsplittable link in $S^3$.
Then $L$ admits infinitely many 3-bridge spheres up to isotopy
if and only if 
$L$ is equivalent to a link $L(q/2p;\beta_1/\alpha_1,\beta_2/\alpha_2)$ (see Figure \ref{fig-3b-inf}) with $q\not\equiv 1\pmod{p}$ and $|\alpha_1|>1$ (or $|\alpha_2|>1$).
\end{theorem}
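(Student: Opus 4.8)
The assertion is an equivalence, and the ``if'' direction is essentially the content of \cite{Jan}: there such an infinite sequence of mutually non-isotopic 3-bridge spheres of $L(q/2p;\beta_1/\alpha_1,\beta_2/\alpha_2)$ is produced by iteratively twisting one fixed 3-bridge sphere along an essential torus in the exterior $E(L)$, under precisely the hypotheses $q\not\equiv 1\pmod p$ and $|\alpha_1|>1$ (or $|\alpha_2|>1$), which are what make that torus incompressible and not boundary-parallel, so that the twisting acts with infinite order on isotopy classes. Hence the content is the ``only if'' direction. So let $L$ be prime and unsplittable with infinitely many 3-bridge spheres up to isotopy. The first move is to observe that $E(L)$ must contain an essential torus: otherwise, being irreducible and $\partial$-irreducible, $E(L)$ is hyperbolic or a small Seifert fibered space, and rigidity (Mostow--Prasad rigidity in the former case, the explicit finite mapping class group in the latter) forces $L$ to have only finitely many 3-bridge spheres. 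Fix an essential torus $T\subset E(L)$; by primeness and the bridge-number constraint, $T$ cuts $(S^3,L)$ into two pieces, each a solid torus meeting $L$ in a nontrivial subtangle.

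The heart of the argument, and the step I expect to be the main obstacle, is to identify these two tangles. Take any 3-bridge sphere $S$ of $L$, with its trivial tangles $(B_1,t_1)\cup(B_2,t_2)$, and put $S$ in good position with respect to $T$, minimizing $|S\cap T|$. Since $T$ is incompressible, $S\cap T$ consists of circles essential on $T$, and the planar pieces into which $T$ cuts $S$ --- lying in the two solid tori bounded by $T$ --- are severely constrained; in particular this displays $S$ as weakly reducible (distance $0$ being precluded by primeness and unsplittability) and supplies essential compressing disks $D_j\subset(B_j,t_j)$ with $\partial D_1\cap\partial D_2=\varnothing$ on the six-punctured sphere $S\setminus L$. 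I would then run the standard innermost-disk/outermost-arc analysis of the arcs and circles of $S\cap T$ on $S\setminus L$, tracking how the arcs partition the six punctures into $2+4$ and $3+3$ parts compatibly with the trivial-tangle structures on the two sides of $S$ and with the disks $D_j$, to conclude that $T$ meets each of $(B_1,t_1)$ and $(B_2,t_2)$ only in meridional and boundary-parallel annuli. This forces each of the two solid-torus pieces cut off by $T$ to be a rational tangle in a solid torus, and the gluing between them to be a twist region, i.e.\ $L$ is equivalent to $L(q/2p;\beta_1/\alpha_1,\beta_2/\alpha_2)$ for suitable parameters. Enumerating and eliminating all remaining configurations of $S\cap T$ compatible with only three strands is where essentially all the difficulty lies. (Alternatively one could pass to the double branched cover $\Sigma_2(L)$, in which $S$ lifts to a covering-involution-invariant genus-2 Heegaard surface and $T$ to an equivariant essential torus, and invoke the structure theory of Heegaard splittings of toroidal 3-manifolds; but working downstairs in tangle language keeps firmer hold of the eventual parameters.)

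Finally, the link produced above is built from two rational tangles and a twist region, hence is arborescent, so the classification of 3-bridge spheres of 3-bridge arborescent links from \cite{Jan3} applies and pins down which parameters give infinitely many 3-bridge spheres: the list is precisely the family of the statement, and it forces $q\not\equiv 1\pmod p$ together with $|\alpha_1|>1$ or $|\alpha_2|>1$ --- if $q\equiv 1\pmod p$ the torus $T$ fails to be essential and $E(L)$ is atoroidal, and if $|\alpha_1|=|\alpha_2|=1$ both tangles are trivial, one side of $T$ is a solid torus so $T$ compresses, and $L$ is in fact a 2-bridge link. This establishes the ``only if'' direction and hence the theorem.
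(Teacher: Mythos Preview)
Your proposal contains a genuine error that undermines the final step. You conclude that ``the link produced above is built from two rational tangles and a twist region, hence is arborescent, so the classification of 3-bridge spheres of 3-bridge arborescent links from \cite{Jan3} applies.'' This is a misreading of the construction of $L(q/2p;\beta_1/\alpha_1,\beta_2/\alpha_2)$: it is a \emph{satellite} construction, not an arborescent one. One places a tangle $L_0$ (two rational tangles joined by trivial arcs) inside a solid torus $V_0$, then embeds $V_0$ as a tubular neighborhood of one component $K_1$ of a 2-bridge link and adjoins the other component $K_2$. The result is not obtained by algebraic tangle sums and is not arborescent when the parameters are as in the statement. Indeed, the paper uses \cite{Jan3} in exactly the opposite direction: it quotes that any 3-bridge arborescent link admits at most four 3-bridge spheres, so a link with infinitely many is automatically \emph{non}-arborescent, and the arborescent classification plays no further role. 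Thus your appeal to \cite{Jan3} to pin down the parameters cannot work.

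There are further gaps earlier. Your reduction to the toroidal case via ``Mostow--Prasad rigidity'' is not a proof: finiteness of the isometry group of a hyperbolic $E(L)$ does not by itself bound the number of bridge spheres, and the Seifert case includes, for example, $(3,3n')$-torus links, whose double branched covers are large Seifert fibered spaces with infinitely many genus-2 Heegaard surfaces; a separate argument (the paper's Proposition~\ref{prop-toruslink}) is needed to show only one of these has the correct hyper-elliptic involution. Your assertion that an essential torus $T$ cuts $(S^3,L)$ into two solid tori is also unjustified in general. Finally, the intersection analysis of $S\cap T$ that you flag as ``where essentially all the difficulty lies'' is left entirely open. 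The paper bypasses all of this by taking precisely the route you set aside: it passes to the double branched cover $M$, uses Li's resolution of the Waldhausen conjecture to force $M$ to be toroidal, invokes Kobayashi's structure theorem for genus-2 Haken manifolds with nontrivial torus decomposition, and then carries out detailed mapping-class-group computations in each of the resulting cases (M1)--(M4) to compare hyper-elliptic involutions with the covering involution $\tau_L$. Working upstairs is what makes the case analysis tractable and what actually identifies the link type.
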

%
%%%%%%%%%%%%%
\begin{figure}
\begin{center}
\includegraphics*[width=9.5cm]{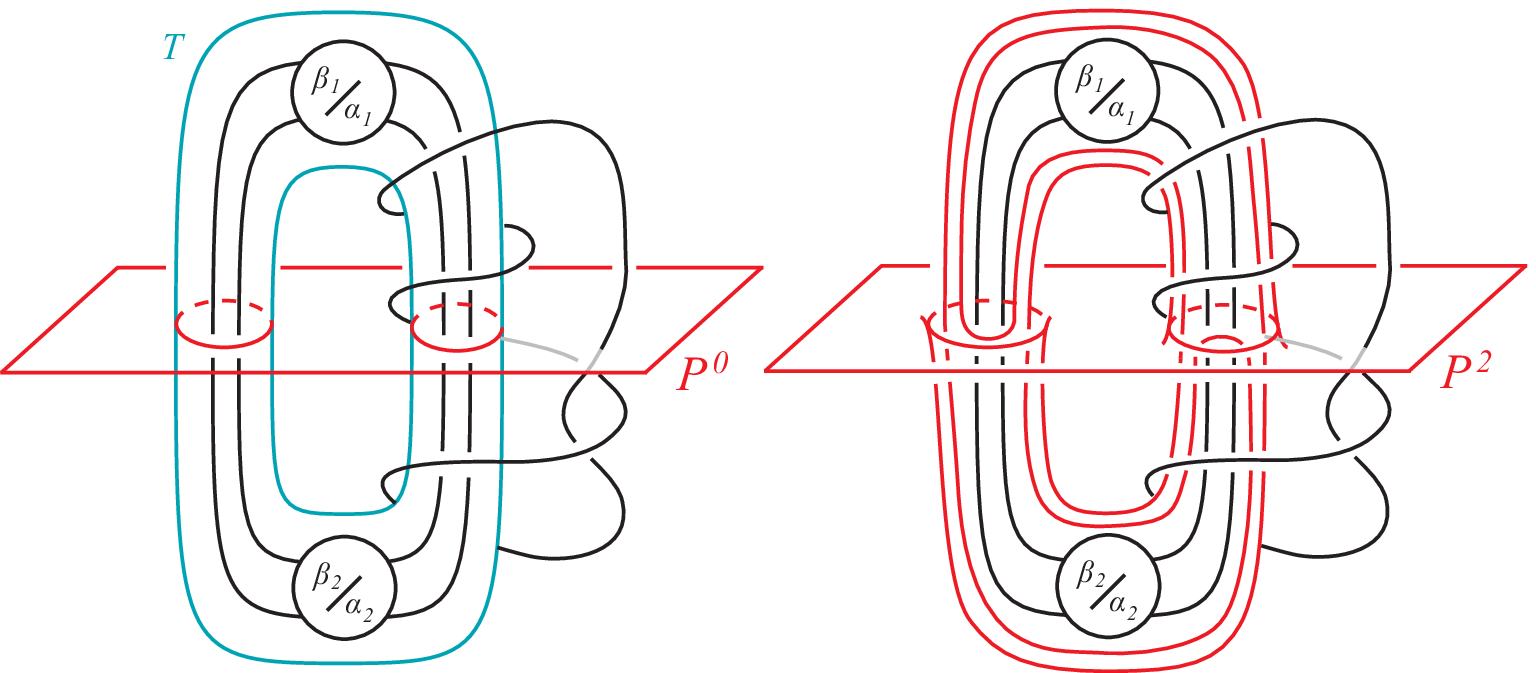}
\end{center}
\caption{}
\label{fig-3b-inf}
\end{figure}
%%%%%%%%%%%%%
%

Here, two links are said to be {\it equivalent} 
if there exists an orientation-preserving homeomorphism of $S^3$ which sends one to the other.
The link $L(q/2p;\beta_1/\alpha_1,\beta_2/\alpha_2)$ in Figure \ref{fig-3b-inf}
is obtained as follows.
Let $V_0$ be a solid torus standardly embedded in $S^3$ and $L_0$ a link in $V_0$ obtained by connecting two rational tangles
of slopes $\beta_1/\alpha_1$ and $\beta_2/\alpha_2$ by \lq\lq trivial arcs\rq\rq\  as in the figure.
Let $K_1\cup K_2$ be a 2-bridge link in $S^3$ of type $(2p,q)$ and $V_1$ the regular neighborhood of $K_1$.
%set $V_2:=E(K_1)$.
%Then $V_2$ is a solid torus since $K_1$ is the unknot in $S^3$.
%Let $m_1$ be the boundary of a meridian disk of $V_1$, and let $l_1$ be an essential loop on $\partial V_1$ null-homologous in $S^3\setminus \interior(V_1)$.
%Let $m_2(\subset \partial V_2)$ be the meridian loop of $K_1$, and let $l_2$ be an essential loop on $\partial V_2$ null-homologous in $S^3\setminus K_1$.
%If we glue $V_1$ and $V_2$ along their boundaries so that
%$m_1$ and $l_1$ are identified with $m_2$ and $l_2$, respectively,
%then we obtain $S^3$.
For $i=0,1$, let $l_i$ be the preferred longitude of $V_i$, that is, $l_i$ is an essential loop on $\partial V_i$ which is null-homologous in $S^3\setminus \interior(V_i)$.
Let $h:V_0\rightarrow V_1$ be a homeomorphism which carries $l_0$ to $l_1$.
We denote by $L(q/2p;\beta_1/\alpha_1,\beta_2/\alpha_2)$ the union of $h(L_0)$ and $K_2$.

\begin{remark}\label{rmk-main}
{\rm
We can also see that any 3-bridge sphere of $L(q/2p;\beta_1/\alpha_1,\beta_2/\alpha_2)$
is isotopic to $P^i$ for some integer $i$,
where $P^i$ is obtained from $P^0$ by applying the $i$-th power of the \lq\lq half Dehn twist\rq\rq\  along the torus $T$ as illustrated in Figure \ref{fig-3b-inf} (see \cite{Jan} for detailed description of $P^i$).
This implies that any prime, unsplittable link admits only finitely many 3-bridge spheres up to homeomorphism.
}
\end{remark}

Theorem \ref{thm-main} gives a partial answer to an analogy of the Waldhausen conjecture 
in terms of knot theory,
namely, a prime, unsplittable link with atoroidal complement admits only finitely many $n$-bridge spheres up to isotopy for a given $n(\in\N)$.
The (original) Waldhausen conjecture asserts that 
a closed orientable atoroidal 3-manifold admits only finitely many Heegaard splittings of given genus $g(\in \N)$ up to isotopy
and was proved to be true by Johannson \cite{Joh2} and Li \cite{Li}.

%%%%%(Heegaard splittings of 3-manifolds)%%%%%%%%%%%%%%%%%%%%%%%%%%%%%%%%%%%%%%%%%%%%%%%%%%%%%%%%%%%%%%%%%%%%%%%

\section{Heegaard splittings of 3-manifolds}\label{sec-hs}

Let $M$ be a closed orientable 3-manifold.
A genus-$g$ {\it Heegaard splitting} of $M$ is a tuple $(V_1, V_2; F)$,
where $V_1$ and $V_2$ are genus-$g$ handlebodies in $M$
such that $M=V_1\cup V_2$ and $F=\partial V_1=\partial V_2=V_1\cap V_2$.
Two Heegaard splittings $(V_1, V_2; F)$ and $(W_1, W_2; G)$ of a 3-manifold $M$ 
are said to be {\it isotopic}
if there exists a self-homeomorphism $f$ of $M$ 
such that $f(F)=G$ and $f$ is isotopic to the identity map $\id_M$ on $M$.

For a genus-$2$ Heegaard splitting $(V_1, V_2; F)$ of $M$,
it is known that  
there is an involution $\tau_F$ on $M$ 
satisfying the following condition.

\begin{itemize}
\item[($\ast$)] $\tau_F(V_i)=V_i$ $(i=1,2)$
and $\tau_F|_{V_i}$ is {\it equivalent} to the standard involution $\T$
on a standard genus-2 handlebody $V$ as illustrated in Figure \ref{fig-hs-3b}.
To be precise, there is a homeomorphism $\psi_i:V_i\rightarrow V$
such that $\T=\psi_i(\tau_F|_{V_i})\psi_i^{-1}$ $(i=1,2)$.
\end{itemize}
%
%%%%%%%%%%%%%
\begin{figure}
\begin{center}
\includegraphics*[width=8cm]{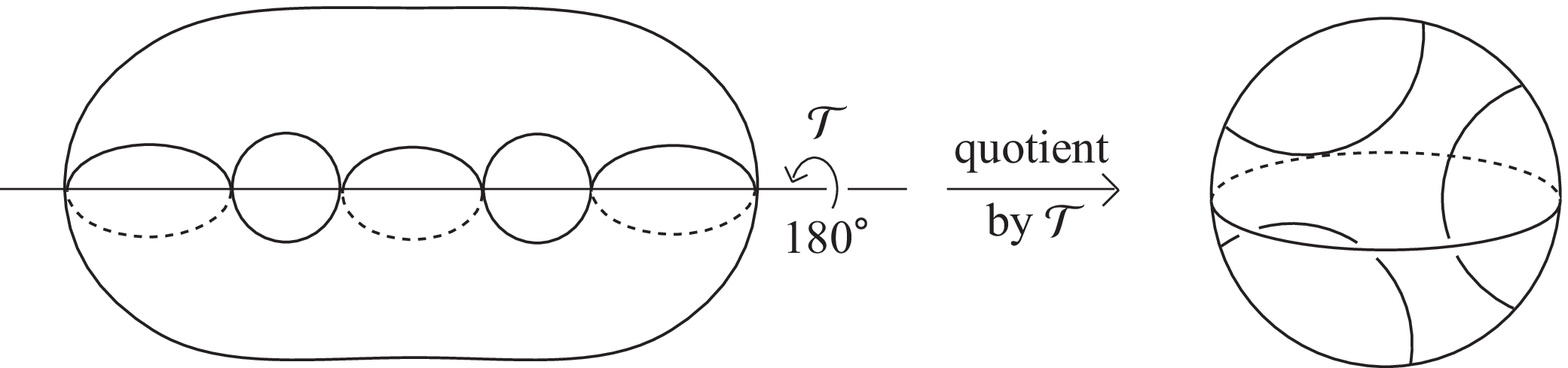}
\end{center}
\caption{}
\label{fig-hs-3b}
\end{figure}
%%%%%%%%%%%%%
%

The strong equivalence class of $\tau_F$
is uniquely determined by the isotopy class of $(V_1, V_2; F)$ (cf. \cite[Proposition 5]{Jan2})
and we call $\tau_F$ the {\it hyper-elliptic involution} 
associated with $(V_1, V_2; F)$ 
(or associated with $F$, in brief).
Here, 
two involutions $\tau$ and $\tau'$ are said to be {\it strongly equivalent}
if there exists a homeomorphism $h$ on $M$ 
such that $h\tau h^{-1}=\tau'$ and that 
$h$ is isotopic to the identity map $\id_M$.

Let $L$ be a prime, unsplittable 3-bridge link.
Let $M$ be the double branched covering of $S^3$ branched along $L$
and $\tau_L$ the covering transformation.
Let $\Phi_L$ be the natural map 
from the set of isotopy classes of 3-bridge spheres of $L$
to the set of isotopy classes of genus-2 Heegaard surfaces of $M$ whose hyper-elliptic involution is $\tau_L$. 
The following proposition is proved in \cite{Jan3}.

\begin{proposition}\label{prop-hs-3b}
$\Phi_L$ is at most $2$-$1$. 
Moreover, $\Phi_L$ is injective if $L$ is not a non-elliptic Montesinos link.
\end{proposition}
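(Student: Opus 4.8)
The plan is to analyze the fibers of $\Phi_L$ by understanding, for a fixed genus-2 Heegaard surface $F$ of $M$ with hyper-elliptic involution $\tau_L$, how many 3-bridge spheres of $L$ can map to its isotopy class. A 3-bridge sphere $S$ of $L$ lifts to $M$ as the fixed-point set data of an involution: cutting $(S^3,L)$ along $S$ into two trivial tangles, the double branched covers of the two tangles are genus-2 handlebodies $V_1,V_2$, and their union along the lift $\widetilde S$ of $S$ is a genus-2 Heegaard splitting $(V_1,V_2;\widetilde S)$ whose associated hyper-elliptic involution is $\tau_L$ by condition $(\ast)$; this is exactly how $\Phi_L$ is defined. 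So the question becomes: given the isotopy class of $(V_1,V_2;F)$ and the data of $\tau_L = \tau_F$ (well-defined up to strong equivalence by the cited Proposition 5 of \cite{Jan2}), how many branch loci in $S^3$ — equivalently, how many 3-bridge spheres up to isotopy — can give rise to it?

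First I would fix $F$ and $\tau_F$ and observe that recovering the 3-bridge sphere $S$ from $(F,\tau_F)$ amounts to recovering the quotient $F/\tau_F$ together with the image of $\fix(\tau_F)$; the indeterminacy in $\Phi_L^{-1}$ of its isotopy class comes precisely from the ambiguity of $\tau_F$ within its strong equivalence class, together with the choice of identification of $(M,\tau_L)$ with the double branched cover. The key step is to count this ambiguity. For the standard involution $\T$ on a genus-2 handlebody $V$ (Figure \ref{fig-hs-3b}), the group of isotopy classes of homeomorphisms of $V$ commuting with $\T$, modulo those isotopic to the identity, is small and well understood; pushing this through the splitting $(V_1,V_2;F)$, the relevant gluing data along $F$ that is compatible with $\tau_F$ is acted on by this finite group, and I expect the orbit count to be at most $2$. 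The factor of $2$ should come from the hyperelliptic involution of the genus-2 surface $F$ itself — more precisely, from a symmetry that interchanges the two possible "sides" or the two bridge structures — and one concretely exhibits a pair of non-isotopic 3-bridge spheres mapping to the same Heegaard surface in the non-injective case.

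Second, to pin down \emph{when} $\Phi_L$ fails to be injective, I would identify the obstruction to the two preimages being isotopic with the existence of an extra symmetry of $(M,\tau_L)$ not realized by a symmetry of $(S^3,L)$ downstairs: such a symmetry exists essentially when $M$ is a Seifert fibered space over $S^2$ with few exceptional fibers, i.e.\ when $L$ is a Montesinos link, and it is \emph{not} available when the Seifert fibration is "elliptic" (the link is a torus link or has small Seifert invariants) because then the relevant mapping class group collapses. So the dichotomy "non-elliptic Montesinos link vs.\ everything else" is exactly the dichotomy "the candidate symmetry exists and is nontrivial vs.\ it doesn't." I would make this precise using the geometric decomposition of $M$ and the classification of symmetries of Seifert fibered and hyperbolic 3-manifolds, together with the Montesinos trick identifying double branched covers of Montesinos links with Seifert fibered spaces.

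The main obstacle I anticipate is the bookkeeping in the first step: carefully tracking the strong equivalence class of $\tau_F$ through the handlebody decomposition and verifying that the stabilizer computation genuinely caps the fiber size at $2$ rather than something larger, since a priori the mapping class group of a genus-2 handlebody fixing the involution, and the way two such decompositions are glued, could contribute more. The delicate point is that isotopies of $S$ in $(S^3,L)$ must be \emph{pairwise} isotopic to the identity, so I must match this constraint precisely against isotopies of $F$ in $M$ that are isotopic to $\id_M$ — a mismatch here is exactly what produces the factor $2$, and getting the correspondence exactly right (no more, no less) is the crux.
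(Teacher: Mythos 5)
First, a point of reference: this paper does not actually prove Proposition \ref{prop-hs-3b}; it states that the proof is given in \cite{Jan3}. So there is no in-paper argument to compare against, and your proposal has to stand on its own as a proof of the statement. Your general framework is the right one --- the correspondence between 3-bridge spheres of $L$ and genus-2 Heegaard splittings of the double branched cover $M$ equipped with the hyper-elliptic involution $\tau_L$, and the observation that the whole content of the proposition is the mismatch between pairwise isotopy of spheres in $(S^3,L)$ and isotopy of Heegaard surfaces in $M$ (equivalently, the kernel of the map from pairwise mapping classes of $(S^3,L)$ to mapping classes of $M$). That is indeed where the fiber of $\Phi_L$ lives.

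However, both quantitative claims of the proposition are left as expectations rather than arguments, and this is a genuine gap, not bookkeeping. (i) The bound ``at most $2$'' is asserted via ``I expect the orbit count to be at most $2$,'' with the factor of $2$ attributed to ``the hyperelliptic involution of the genus-2 surface $F$ itself.'' That particular candidate cannot be the source of two distinct preimages: the hyperelliptic involution of $F$ is the restriction $\tau_L|_F$, which descends to the \emph{identity} on the quotient sphere $S=F/\tau_L$, so it does not produce a second 3-bridge sphere. The element that could produce a second preimage is a self-homeomorphism of $(S^3,L)$ preserving $S$ that lifts to a map of $M$ isotopic to $\id_M$ but is not itself pairwise isotopic to $\id_{(S^3,L)}$; you never exhibit such an element or bound the group of such elements, and doing so requires the computation of the symmetry groups involved (this is what \cite{Jan3} supplies). (ii) The characterization of non-injectivity is likewise asserted: you claim the extra symmetry ``exists essentially when $M$ is Seifert fibered over $S^2$ with few exceptional fibers'' and ``is not available when the fibration is elliptic,'' but you give no mechanism for why ellipticity (for Montesinos links, sphericity of the base orbifold of the double cover) should kill the symmetry, nor why no extra symmetry can arise in the toroidal and hyperbolic cases. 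As written, the dichotomy in the statement is restated in geometric language rather than derived. To close the gap you would need, at minimum: an explicit description of the group $\ker\bigl(\pi_0\,\mathrm{Homeo}(S^3,L)\to\pi_0\,\mathrm{Homeo}(M)\bigr)$ restricted to classes preserving a given bridge sphere, a proof that it has order at most $2$ for prime unsplittable $L$, and a case analysis (via the geometrization of $M$) showing it is trivial outside the non-elliptic Montesinos case.
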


In the rest of this section,
we recall a characterization of genus-$2$ $3$-manifolds admitting nontrivial torus decompositions due to Kobayashi \cite{Kob} (and \cite{Kob2}).
We use the following notation.

\begin{itemize}
\item[{$D[r]$}] (resp. $M\ddot{o}[r], A[r]$) : 
the set of all orientable Seifert fibered spaces 
over a disk $D$ (resp. a M\"{o}bius band $M\ddot{o}$, an annulus $A$) 
with $r$ exceptional fibers.

\item[$M_K$] :
the set of the exteriors of the nontrivial 2-bridge knots.

\item[$M_L$] :
the set of the exteriors of the nontrivial 2-component 2-bridge links.% different from the Hopf link.

\item[$L_K$]: the set of the exteriors of the 1-bridge knots in lens spaces 
each of which admits a complete hyperbolic structure or admits a Seifert fibration
whose regular fiber is not a meridian loop.

\item[$KI$]: the twisted $I$-bundle on the Klein bottle.

\end{itemize}

If $K$ is a 2-bridge knot (resp. a 2-bridge link, a 1-bridge knot in a lens space),
$E(K)$ denotes a manifold in $M_K$ (resp. $M_L$, $L_K$)
obtained as the exterior of $K$.

\begin{theorem}\label{thm-kob}
Let $M$ be a closed, connected, orientable Haken $3$-manifold of Heegaard genus $2$
which admits a nontrivial torus decomposition.
Let $(V_1, V_2; F)$ be a genus-$2$ Heegaard splitting of $M$.
Then $M$ satisfies one of the following four conditions (1), (2), (3) and (4), and
$F$ is isotopic to a Heegaard surface, denoted by the same symbol $F$,
as follows 
(see Figure \ref{fig-hs}).
%
%%%%%%%%%%%%
\begin{figure}
\begin{center}
\includegraphics*[width=12.8cm]{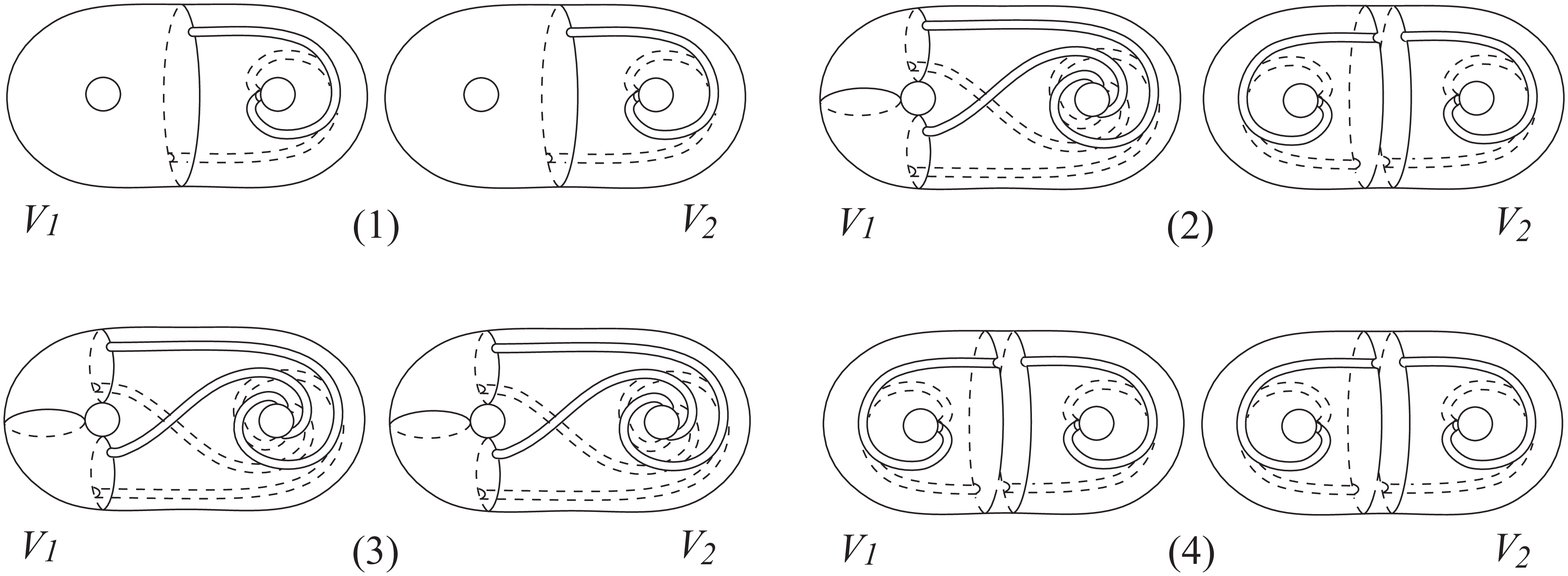}
\end{center}
\caption{}
\label{fig-hs}
\end{figure}
%%%%%%%%%%%%

\begin{enumerate}
\item[{\rm (1)}] 
$M$ is obtained from $M_1\in D[2]$ and $M_2=E(K)\in L_K$ 
by identifying their boundaries so that 
the regular fiber of $M_1$ is identified with the meridian loop of $K$.
Moreover, 
\begin{itemize}
\item $M_1\cap F$ is an essential annulus saturated in the Seifert fibration of $M_1$, and
\item $M_2\cap F$ is a 2-holed torus which gives a 1-bridge decomposition of the 1-bridge knot $K$.
\end{itemize}
Moreover, $V_i\cap T$ $(i=1,2)$ consists of a single separating essential annulus,
where $T=\partial M_1=\partial M_2$.

\item[{\rm (2)}] 
$M$ is obtained from $M_1\in D[2]\cup D[3]$ and $M_2=E(K)\in M_K$ 
by identifying their boundaries so that 
the regular fiber of $M_1$ is identified with the meridian loop of $K$.
Moreover, 
\begin{itemize}
\item $M_1\cap F$ consists of two disjoint essential saturated annuli in $M_1$ 
which divide $M_1$ into three solid tori, and
\item $M_2\cap F$ is a 2-bridge sphere of the nontrivial 2-bridge knot $K$.
\end{itemize}

Moreover, by exchanging $V_1$ and $V_2$ if necessary, 

\begin{itemize}
\item[{\rm (i)}]
$V_1\cap T$ consists of 
two disjoint non-separating essential annuli 
satisfying the following condition: 
there exists a complete meridian disk system $(D_1, D_2)$ of $V_1$ 
such that $D_1\cap (V_1\cap T)=\emptyset$
and  $D_2\cap (V_1\cap T)$ 
consists of essential arcs properly embedded in each annulus of $V_1\cap T$, and
\item[{\rm (ii)}]
$V_2\cap T$ consists of 
disjoint non-parallel separating essential annuli,
\end{itemize}
where $T=\partial M_1=\partial M_2$.

\item[{\rm (3)}] 
$M$ is obtained from 
\begin{itemize}
\item[{\rm (3-1)}] $M_1\in M\ddot{o}[r]$ $(r=0,1,2)$ and $M_2=E(K)\in M_K$, or
\item[{\rm (3-2)}] $M_1\in A[r]$ $(r=0,1,2)$ and $M_2=E(K)\in M_L$
\end{itemize}
by identifying their boundaries so that 
the regular fiber of $M_1$ is identified with the meridian loop of $K$.
Moreover, 
\begin{itemize}
\item $M_1\cap F$ consists of two disjoint essential saturated annuli in $M_1$ 
which divide $M_1$ into two solid tori, and
\item $M_2\cap F$ is a 2-bridge sphere of the 2-bridge link $K$.
\end{itemize}
Moreover, $V_i\cap T$ $(i=1,2)$ consists of two disjoint non-separating essential annuli 
satisfying the condition {\rm (i)} of (2),
where $T=\partial M_1=\partial M_2$.

\item[{\rm (4)}] 
$M$ is obtained from $M_1, M_2\in D[2]$ and 
$M_3=E(K_1\cup K_2)\in M_L$
by identifying their boundaries so that 
the regular fiber of $M_i$ is identified with the meridian loop of $K_i$ ($i=1,2$).
Moreover, 
\begin{itemize}
\item $M_i\cap F$ is an essential saturated annulus in $M_i$ $(i=1,2)$, and
\item $M_3\cap F$ is a 2-bridge sphere of the 2-bridge link $K_1\cup K_2$.
\end{itemize}
Moreover, $V_i\cap T$ $(i=1,2)$ consists of 
two disjoint non-parallel separating essential annuli 
satisfying the condition {\rm (ii)} of (2),
where $T=\partial M_1\cup \partial M_2=\partial M_3$.

\end{enumerate}
\end{theorem}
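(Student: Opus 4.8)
The plan is to deduce this statement from Kobayashi's classification of genus-two $3$-manifolds that contain an essential torus \cite{Kob, Kob2}, and then to upgrade that classification with the additional geometric control over the Heegaard surface $F$ and over the position of the decomposing tori relative to the two handlebodies $V_1,V_2$.

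First I would extract the structural backbone from \cite{Kob}. Since $M$ is Haken, has Heegaard genus $2$, and admits a nontrivial torus decomposition, each geometric piece is severely constrained: the Seifert fibered pieces have base orbifold a disk, an annulus, or a M\"obius band with only a bounded number of exceptional fibers --- the bound being forced by the hypothesis that $M$ has Heegaard genus two, since a Seifert piece with too many exceptional fibers would push the Heegaard genus above $2$ --- and every remaining piece is the exterior of a nontrivial $2$-bridge knot, a nontrivial $2$-bridge link, or a $1$-bridge knot in a lens space (the last case accounting for $L_K$), with the gluing maps identifying a regular Seifert fiber with a meridian of the knot or link. Enumerating the ways these pieces can be assembled into one closed $3$-manifold of Heegaard genus $2$ yields exactly the configurations (1)--(4); it then remains to pin down $F$ and to describe $V_i\cap T$.

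Next I would isotope $F$ into ``aligned'' position with respect to the union $T$ of the decomposing tori. Because each component of $T$ is incompressible and $F$ is a Heegaard surface, a standard sequence of innermost-disk and outermost-arc isotopies --- of the kind used to put a Heegaard surface into good position against an incompressible surface --- makes $F\cap T$ a collection of loops essential both on $F$ and on $T$, so that $F\cap M_j$ is a properly embedded essential surface in each piece $M_j$. In a $2$-bridge exterior $M_2$, an essential planar surface with meridional boundary whose complementary pieces are handlebodies must be the bridge sphere, which is unique up to isotopy by Otal's theorem \cite{Ota1, Ota2}; this forces the loops of $F\cap T$ to be meridional on $T$ (and the $L_K$ case of (1) is handled by the analogous uniqueness statement for $1$-bridge decompositions of $1$-bridge knots in lens spaces). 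On the Seifert side the gluing then makes those loops regular fibers, so $F\cap M_1$ is an essential surface with fiber boundary in a Seifert piece with small base orbifold, hence a union of vertical saturated annuli; counting how many of these annuli are needed to cut the Seifert piece into solid tori reproduces the bulleted descriptions of $M_i\cap F$ in (1)--(4).

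The main obstacle is the last assertion, about $V_i\cap T$. Once $F$ is aligned, each $V_i\cap T$ is simply $T$ cut along the loops $F\cap T$, hence automatically a disjoint union of annuli; the real work is to determine, case by case, which of these annuli separate the genus-two handlebody $V_i$ and which do not, and to produce the adapted complete meridian-disk systems required in (2)(i) and (2)(ii). I would do this by following how the vertical annuli on the Seifert side are glued across $T$ to the pieces of $V_i$ cut out of the bridge-sphere side, using the explicit model of a trivial-tangle complement together with the classification of essential annuli in a genus-two handlebody; whether an annulus of $V_i\cap T$ separates $V_i$ is forced by whether the corresponding loop of $F\cap T$ separates $F$, which can be read directly off the bridge picture. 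I expect the alignment step --- in particular, excluding the possibility that $F$ restricts to a horizontal surface in some Seifert piece, and removing inessential components of $F\cap T$ --- and the uniform bookkeeping of the separating/nonseparating dichotomy over all four cases to be the genuinely delicate points; the rest is routine once the position of $F$ is under control.
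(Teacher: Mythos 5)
There is a genuine gap, on two counts. First, your central technical step --- isotoping $F$ so that $F\cap T$ is essential on both surfaces and concluding that $F\cap M_j$ is an \emph{essential} surface in each piece by ``a standard sequence of innermost-disk and outermost-arc isotopies'' --- does not work as described. A Heegaard surface is compressible into both handlebodies, so even after all inessential intersection curves are removed there is no reason for $F\cap M_j$ to be incompressible in $M_j$; controlling how a genus-$2$ Heegaard surface meets the decomposing tori is precisely the hard content of Kobayashi's paper \cite{Kob}, not a routine cleanup. The paper does not re-derive this: for the case where every JSJ torus is separating it simply reads conditions (1), (2), (3-1), (4) --- including the bulleted descriptions of $F\cap M_j$ and the structure of $V_i\cap T$ --- off the \emph{proof} of the main theorem of \cite{Kob}. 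Your proposal treats \cite{Kob} as supplying only the list of manifolds and then rebuilds the surface positions by an argument that, as stated, is not valid.

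Second, and more specifically, you miss the one case where the paper actually has to do new work: condition (3-2) when the torus decomposition $\Gamma$ has a \emph{non-separating} component. There $T=\partial M_1=\partial M_2$ consists of two tori, but the JSJ decomposition may consist of a single non-separating torus (when $M_1$ or $M_2$ is $S^1\times S^1\times I$), so ``aligning $F$ with $T$'' is not even well posed until the second torus of $T$ has been produced. The paper constructs it as $\tau_F(\Gamma)$, the image of $\Gamma$ under the hyper-elliptic involution associated with $F$, shows $\tau_F(\Gamma)$ is essential in $M\setminus\Gamma$, and then argues via Lemmas 3.1--3.3 of \cite{Kob2} that $T$ is isotopic to $\Gamma\cup\tau_F(\Gamma)$; in the subcase $M_1\in A[2]$ one must additionally rule out an essential torus splitting $M_1$ into two $A[1]$ pieces, using the fact that a meridian of a $2$-bridge link component cannot be a regular fiber of its exterior. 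None of this apparatus --- the involution, the distinction between $\Gamma$ and $T$, the $A[2]$ exclusion --- appears in your outline, and without it the non-separating case is not covered.
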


\begin{proof}
Let $\Gamma$ be the union of tori which gives the torus decomposition of $M$.
If each component of $\Gamma$ is separating,
then we see from the proof of the main theorem of \cite{Kob}
that $M$ and $F$ satisfies one of the conditions (1), (2), (3-1) and (4), where $\Gamma=T$.
In the rest of this proof, we assume that $\Gamma$ has a non-separating component 
and show that $M$ and $F$ satisfy the condition (3-2).
By the proof of the main theorem of \cite{Kob}, 
$M$ satisfies the condition (3-2).
In particular, $M$ is obtained by gluing $M_1\in A[r]$ $(r=0, 1, 2)$ and 
$M_2=E(K_1\cup K_2)$, where $K_1\cup K_2$ is a nontrivial 2-bridge link.
In the rest of this proof, we see that $F$ satisfies the condition (3-2) in this case.

First assume that neither $M_1$ nor $M_2$ is homeomorphic to $S^1\times S^1\times I$.
Then $\Gamma$ consists of two components.
By an argument similar to that for the main theorem of \cite{Kob}, 
together with Lemmas 3.1, 3.2 and 3.3 of \cite{Kob2},
we can see that $F$ satisfies the condition (3-2).

In the remainder of this proof, 
assume that either $M_1$ or $M_2$ is homeomorphic to $S^1\times S^1\times I$.
%Since a component of $T=\partial M_1=\partial M_2$ gives a torus decomposition of $M$,
Then we may assume that $\Gamma$ is a component of $T$.

If $M_1$ is homeomorphic to $S^1\times S^1\times I$,
then $M\setminus \Gamma$ is homeomorphic to the interior of $M_2$.
By an argument similar to that for the main theorem of \cite{Kob}, 
together with Lemmas 3.1, 3.2 and 3.3 of \cite{Kob2},
we can see that $\Gamma\cap V_i$ is a non-separating annulus as illustrated in Figure \ref{fig-hs-r0}.
%
%%%%%%%%%%%%
\begin{figure}
\begin{center}
\includegraphics*[width=3.9cm]{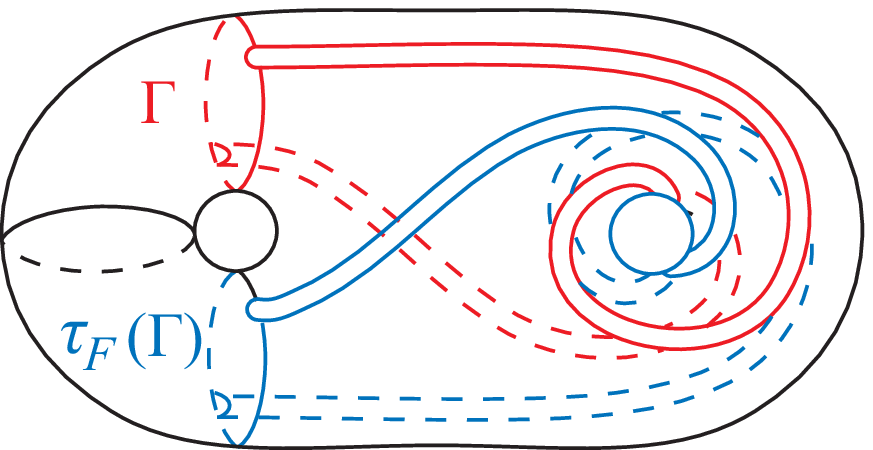}
\end{center}
\caption{}
\label{fig-hs-r0}
\end{figure}
%%%%%%%%%%%%
%
Let $\tau_F$ be the hyper-elliptic involution associated with $F$.
Then $\tau_F(\Gamma)$ is an essential torus in $M\setminus \Gamma$ (see Figure \ref{fig-hs-r0}).
Note that $M_2$ is homeomorphic to $A(1/n)$ for some integer $n$ or is hyperbolic
(see \cite[Lemma 4.4]{Kob} and see \cite[Section 2]{Jan2} for notation).
Thus any essential torus in $M_2$ is $\partial$-parallel,
and hence, $\tau_F(\Gamma)$ is isotopic to $\Gamma$ in $M$.
This implies that $T$ is isotopic to $\Gamma\cup\tau_F(\Gamma)$ 
since $M_1$ is homeomorphic to $S^1\times S^1\times I$, 
and we see that $F$ satisfies the condition (3).

If $M_2=E(K_1\cup K_2)$ is homeomorphic to $S^1\times S^1\times I$,
then $K_1\cup K_2$ is a Hopf link and
$M\setminus \Gamma$ is homeomorphic to the interior of $M_1$.
By an argument similar to that in the previous case,
we see that $\Gamma\cap V_i$ is a non-separating annulus as illustrated in Figure \ref{fig-hs-r0}
and that $\tau_F(\Gamma)$ is also a non-separating essential torus in $M\setminus \Gamma$.
If $M\setminus \Gamma(\cong M_1)\in A[r]$ for $r\leq 1$, then any essential torus in $M_1$ is $\partial$-parallel,
and hence, we see that $F$ satisfies the condition (3) by an argument similar to that in the previous case.
(We use the same symbol $M\setminus \Gamma$ to denote the manifold
obtained by closing-up $M\setminus \Gamma$ with two tori.)
If $M\setminus \Gamma(\cong M_1)\in A[2]$, 
then any essential torus in $M_1$ is either $\partial$-parallel 
or an essential torus which divides $M_1$ into two Seifert fibered spaces belonging to $A[1]$.
Hence, 
$\Gamma\cup\tau_F(\Gamma)$ cuts $M$ into two Seifert fibered spaces which belong to $A[1]$
whose fibrations are identical on $\tau_F(\Gamma)$.
On the other hand, by an argument in \cite[Section 6]{Kob} (see also Figure \ref{fig-hs-r0}),
one of the two Seifert fibered spaces must be the exterior of a 2-bridge link, say $L'$, and
the meridians of $L'$ must be identified with regular fibers of the other Seifert fibered space.
Since the fibrations of the two Seifert fibered space are identical on $\tau_F(\Gamma)$,
this implies that the meridian of a component of $L'$ is a regular fiber of $E(L')(\in A[1])$.
However, this is impossible (cf. \cite[Lemma 1]{Jan2} or \cite[Lemma 4.4]{Kob}).
Hence, any essential torus in $M_1$ is $\partial$-parallel,
and we see that $F$ satisfies the condition (3)
\end{proof}
\vspace{2mm}

We need to study the manifolds satisfying one of the following conditions
in the proof of Theorem \ref{thm-main} (cf. Section \ref{sec-proof}).

\begin{itemize}
\item[(M1)] $M$ is obtained by gluing $M_1\in D[2]$ and $M_2=L(p,q)\setminus N(K)$ as in Theorem \ref{thm-kob} (1), where $M_2$ is hyperbolic,
\item[(M2)] $M$ is obtained by gluing $M_1\in D[2]\cup D[3]$ and $M_2=E(K)\in M_K$ as in Theorem \ref{thm-kob} (2), where $M_2$ is hyperbolic,
\item[(M3-1-1)] $M$ is obtained by gluing $M_1\in M\ddot{o}[r]$ $(r=1,2)$ and $M_2=E(K)\in M_K$ as in Theorem \ref{thm-kob} (3),
where $K$ is a torus knot of type $(2,n)$,
\item[(M3-1-2)] $M$ is obtained by gluing $M_1\in M\ddot{o}[r]$ $(r=0,1,2)$ and $M_2=E(K)\in M_K$ as in Theorem \ref{thm-kob}(3), where $M_2$ is hyperbolic,
\item[(M3-2-1)] $M$ is obtained by gluing $M_1\in A[r]$ $(r=0,1,2)$ and $M_2=E(K)\in M_L$ as in Theorem \ref{thm-kob} (3),
where $K$ is a torus link of type $(2,n)$,
\item[(M3-2-2)] $M$ is obtained by gluing $M_1\in A[r]$ $(r=0,1,2)$ and $M_2=E(K)\in M_L$ as in Theorem \ref{thm-kob}(3), where $M_2$ is hyperbolic,
\item[(M4)] $M$ is obtained by gluing $M_1, M_2\in D[2]$ and $M_3=E(K)\in M_K$ as in Theorem \ref{thm-kob} (4), where $M_3$ is hyperbolic.
\end{itemize}

\begin{remark}\label{rmk-dbc}
{\rm 
The double branched covering of $S^3$ branched over $L(q/2p;\beta_1/\alpha_1, \beta_2/\alpha_2)$
satisfies the condition (M3-1-2) or (M3-2-2), where $r\geq 1$ (cf. \cite{Jan}).
}
\end{remark}

%%%%%()%%%%%%%%%%%%%%%%%%%%%%%%%%%%%%%%%%%%%%%%%%%%%%%%%%%%%%%%%%%%%%%%%%%%%%%

\section{Mapping class groups}\label{sec-mcg}

In this section,
we calculate certain subgroups of the mapping class groups of 
the Seifert fibered spaces and the manifolds which arose in Theorem \ref{thm-kob}.
This enables us to compare the hyper-elliptic involutions of genus-2 Heegaard surfaces of 3-manifolds.
For a hyperbolic 3-manifold $N$,
let $\mcg(N)$ be the (orientation-preserving) mapping class group of $N$.
For a Seifert fibered space $N$,
let $\mcg(N)$ be the subgroup of the (orientation-preserving) mapping class group of $N$ 
which consists of elements preserving each singular fiber of $N$.
(See \cite{Jan2} for more details.)
When $N$ is a Seifert fibered space over a surface $F$, 
let $\mcg^0(N)$ be the subgroup of $\mcg(N)$ which consists of the elements inducing the identity map on $F$, 
$\mcg^{\ast}(F)$ the mapping class group of $(F, {\rm exceptional\ points})$.
For a 3-manifold $M$ in Theorem \ref{thm-kob},
let $\mcg(M)$ be the subgroup of the (orientation-preserving) mapping class group of $M$ 
which consists of the elements preserving each piece of the torus decomposition of $M$ and each singular fiber of the Seifert pieces.

We describe some elements of the mapping class groups of certain Seifert fibered spaces.
Let $N_1$ and $N_2$ be the Seifert fibered spaces 
$M\ddot{o}(\beta_1/\alpha_1,\beta_2/\alpha_2)\in M\ddot{o}[2]$ and $A(\beta_1/\alpha_1,\beta_2/\alpha_2)$\\$\in A[2]$, respectively
(see \cite[Section 2]{Jan2} for notation). 
We define $g_i, b, D_j\in \mcg(N_1)$ and $h_i, a, D_j'\in \mcg(N_2)$ $(i,j\in\{1,2\})$ as follows.
We denote by $g_i$ and $h_i$ the involutions as illustrated in Figure \ref{fig-mcg}.
The symbols $a$ and $b$ denote the Dehn twist along saturated annuli $A_a$ and $A_b$, respectively, in the direction of a fiber, and
$D_j$ and $D_j'$ are the Dehn twists along saturated tori $T_{D_j}$ and $T_{D_j'}$, respectively, in the direction of loops intersecting regular fibers in one point,
as illustrated in Figure \ref{fig-mcg}.
%
%%%%%%%%%%%%
\begin{figure}
\begin{center}
\includegraphics*[width=11.5cm]{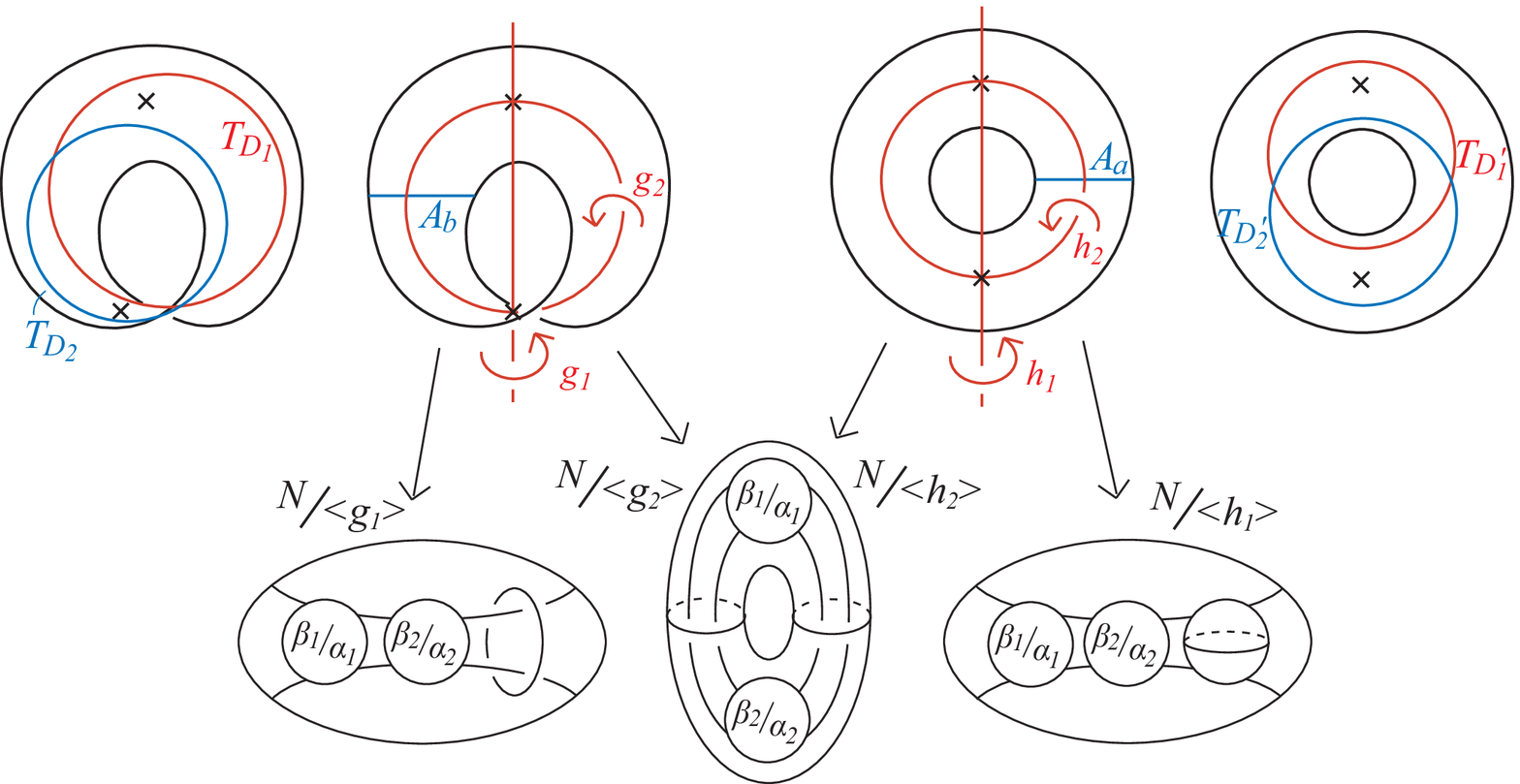}
\end{center}
\caption{}
\label{fig-mcg}
\end{figure}
%%%%%%%%%%%%
%
For more precise description of the above elements, see \cite[Section 5 and Remark 2]{Jan2}.

\begin{lemma}\label{lem-mcg-sfs2}
{\rm (1)} If $N$ is a Seifert fibered space $M\ddot{o}(\beta_1/\alpha_1,\beta_2/\alpha_2)\in M\ddot{o}[2]$,
then $\mcg(N)=\langle b\rangle$\\$\times(\langle D_1,D_2\rangle\rtimes\langle g_1,g_2\rangle)$ 
and has a group presentation
\begin{align*}
\begin{array}{rll}
\mcg(N)=\langle D_1,D_2,g_1,g_2,b&|&g_i^2, [g_1,g_2], g_1D_jg_1=D_j^{-1}, g_2D_1g_2=D_2^{-1},\\
&&b^2, [g_i,b], [D_j,b]\ (i,j\in\{1,2\})\rangle.
\end{array}
\end{align*}
In particular, the subgroup $\langle D_1,D_2\rangle$ of $\mcg(N)$
is a free group of rank 2.

{\rm (2)} If $N$ is a Seifert fibered space $A(\beta_1/\alpha_1,\beta_2/\alpha_2)\in A[2]$,
then $\mcg(N)=\langle a\rangle\rtimes(\langle D_1',D_2'\rangle\rtimes$\\$\langle h_1,h_2\rangle)$ 
and has a group presentation
\begin{align*}
\begin{array}{rll}
\mcg(N)=\langle D_1',D_2',h_1,h_2,a&|&h_i^2, [h_1,h_2], h_1D_j'h_1=D_j'^{-1}, h_2D_1'h_2=D_2'^{-1},\\
&&h_1ah_1=a, h_2ah_2=a^{-1}, D_j'aD_j'^{-1}=a\ (i,j\in\{1,2\})\rangle.
\end{array}
\end{align*}
In particular, the subgroup $\langle D_1',D_2'\rangle$ of $\mcg(N)$
is a free group of rank 2.
\end{lemma}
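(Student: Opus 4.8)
The plan is to compute $\mcg(N)$ for $N \in M\ddot{o}[2]$ (resp. $N \in A[2]$) by exploiting the structure of $N$ as a Seifert fibered space over an orbifold. Since $\mcg(N)$ by definition consists of fiber-preserving classes that fix each exceptional fiber, there is a natural short exact sequence relating $\mcg(N)$ to the mapping class group $\mcg^{\ast}(F)$ of the base orbifold (with its two marked exceptional points) and the ``vertical'' subgroup $\mcg^0(N)$ of classes inducing the identity on the base. First I would analyze $\mcg^{\ast}(F)$: for $F$ a M\"obius band with two marked points, the orientation-preserving, exceptional-point-preserving mapping classes are generated by the involutions $g_1$ (realizing the hyperelliptic-type symmetry of the disk-with-two-holes double cover, or the symmetry swapping the two ``sides'' near a marked point) and $g_2$ (interchanging — up to isotopy on the base — data associated with the two exceptional points via a half-twist); one checks these satisfy $g_i^2 = 1$ and $[g_1,g_2]=1$, giving $\langle g_1,g_2\rangle \cong \Z/2 \times \Z/2$. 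For $F$ an annulus with two marked points one gets instead $\langle h_1, h_2\rangle$ with the same $(\Z/2)^2$ presentation, plus the annular Dehn twist $a$ on which $h_1$ acts trivially and $h_2$ acts by inversion (reflecting the orientation of the core curve).

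Next I would identify $\mcg^0(N)$. The vertical classes are generated by Dehn twists along vertical (saturated) tori and annuli. In the M\"obius case, the relevant vertical surfaces are the two saturated tori $T_{D_1}, T_{D_2}$ separating off neighborhoods of the two exceptional fibers, giving $D_1, D_2$, and the saturated annulus $A_b$ coming from the core curve of the M\"obius band, giving $b$; since $b$ is a twist along an annulus whose boundary is ``vertical'' and which is disjoint (up to isotopy) from everything, $b$ is central. One shows $\langle D_1, D_2 \rangle$ is free of rank $2$ — this is the standard fact that twists along two disjoint vertical tori in a Seifert piece over a disk-with-two-exceptional-points generate a free group (the boundary slopes/$\pi_1$-image gives the no-relation argument, exactly as in \cite[Section 5]{Jan2}). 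Thus $\mcg^0(N) = \langle b \rangle \times \langle D_1, D_2\rangle$. In the annulus case the analogous computation gives $\langle D_1', D_2'\rangle$ free of rank $2$, but now the core annulus twist $a$ does \emph{not} land in the kernel on the base in the same way and interacts with the Dehn twists and with $h_2$: one gets $\mcg^0(N) = \langle a \rangle \rtimes \langle D_1', D_2'\rangle$ rather than a direct product, and $h_2 a h_2 = a^{-1}$.

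The last step is to assemble the semidirect product and read off the presentation. The extension $1 \to \mcg^0(N) \to \mcg(N) \to \langle g_1, g_2\rangle \to 1$ splits because the involutions $g_1, g_2$ are realized by honest fiber-preserving involutions of $N$ (the standard symmetries of the model Seifert space), so $\mcg(N) = \mcg^0(N) \rtimes \langle g_1, g_2\rangle$; then I compute the action of $g_i$ on the generators $D_j, b$ by tracking what the involution does to the vertical tori and annulus — $g_1$ reverses the orientation of the base curves dual to both exceptional fibers so $g_1 D_j g_1 = D_j^{-1}$, while $g_2$ (which interchanges the roles of the two exceptional fibers) conjugates $D_1$ to $D_2^{-1}$; and $g_i$ commutes with $b$ since $b$ is central. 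Collecting these relations yields exactly the stated presentation, and symmetrically for the $A[2]$ case.

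The main obstacle will be verifying that $\langle D_1, D_2\rangle$ (resp. $\langle D_1', D_2'\rangle$) is genuinely free of rank $2$ — i.e., that no word in the two vertical Dehn twists is isotopic (through fiber-preserving, exceptional-fiber-fixing homeomorphisms) to the identity. This requires a concrete invariant: I would map $\mcg^0(N)$ to the group of affine automorphisms of the boundary tori (or to $\pi_1$ of the base orbifold, or to $H_1$ of an appropriate cyclic cover) and exhibit that the images of $D_1, D_2$ generate a rank-$2$ free subgroup there, for instance because each $D_i$ acts as a parabolic on a distinct boundary component while acting trivially on the other, so a ping-pong argument applies. This is precisely the kind of computation carried out in \cite[Section 5 and Remark 2]{Jan2}, and I would cite it for the detailed verification rather than reproduce it.
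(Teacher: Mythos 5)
Your overall scaffolding (the exact sequence $1\to\mcg^0(N)\to\mcg(N)\to\mcg^{\ast}(F)\to 1$, splitting it, computing conjugation actions, and assembling a presentation \`a la \cite{Joh3}) matches the paper's, but you have placed the crucial generators $D_1,D_2$ in the wrong part of that sequence, and this breaks the key step of the lemma. By \cite[Lemma 25.2]{Joh} (which the paper invokes), for $N\in M\ddot{o}[2]$ the vertical subgroup $\mcg^0(N)$ is \emph{finite of order two}, generated by $b$ alone; it cannot contain a free group of rank $2$, so your claim $\mcg^0(N)=\langle b\rangle\times\langle D_1,D_2\rangle$ is false. The twists $D_j$ are taken along saturated tori \emph{in the direction of loops meeting regular fibers once}, so they do not induce the identity on the base: they project to nontrivial elements of $\mcg^{\ast}(F)$, and $\mcg^{\ast}(F)$ is therefore much larger than the $(\Z/2)^2$ you assign to it. The paper's actual source of freeness is the Birman exact sequence $1\to\pi_1(F',x_0)\to\mcg^{\ast}(F)\to\mcg_1\to 1$ of \cite[Section 4.1]{Bir3}: the $D_j$ are the point-pushing classes corresponding to the two generators of $\pi_1$ of a once-punctured M\"obius band, which is free of rank $2$ and injects into $\mcg^{\ast}(F)$. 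Your proposal contains no substitute for this.

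Your fallback argument for freeness would also not work as described. Dehn twists supported on \emph{disjoint} vertical tori commute (disjointly supported homeomorphisms always do), which is exactly why the groups $\D$ of twists along the JSJ tori in Lemma \ref{lem-d} come out abelian; so ``twists along two disjoint vertical tori generate a free group'' is not a standard fact but a false one. Likewise the ping-pong you sketch, with ``each $D_i$ acting as a parabolic on a distinct boundary component,'' has nothing to act on: $M\ddot{o}(\beta_1/\alpha_1,\beta_2/\alpha_2)$ has a single boundary torus, and the $D_j$ are supported away from it. To repair the proof you must identify $D_1,D_2$ as images of $\pi_1(F',x_0)$ under the point-pushing map in the Birman exact sequence (equivalently, as pushes of one exceptional fiber around the two generators of the punctured base), after which both the freeness of $\langle D_1,D_2\rangle$ and the conjugation relations $g_1D_jg_1=D_j^{-1}$, $g_2D_1g_2=D_2^{\pm1}$ can be read off on the base orbifold; the annulus case is then parallel, with $a\in\mcg^0(N)$ now infinite cyclic rather than order two.
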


\begin{proof}
By \cite[Proposition 25.3]{Joh}, we have a split exact sequence
$$
1\rightarrow \mcg^0(N)\rightarrow \mcg(N)\rightarrow \mcg^{\ast}(F)\rightarrow 1.
$$

(1) By \cite[Lemma 25.2]{Joh}, $\mcg^0(N)$ is an order-2 group generated by $b$.
On the other hand, by \cite[Section 4.1]{Bir3},
we have the following exact sequence, called the \lq\lq Birman exact sequence\rq\rq.
$$
1\rightarrow \pi_1(F',x_0)\rightarrow \mcg^{\ast}(F)(\cong \mcg_2)\rightarrow \mcg_1\rightarrow 1,
$$
where 
$\pi_1(F',x_0)$ denotes the fundamental group of a once-punctured M\"{o}bius band and
$\mcg_{n}$ denotes the mapping class group of a M\"{o}bius band fixing $n$ specified points.
Recall that $\pi_1(F',x_0)$ is a free group of rank 2
and that $\mcg_1=\langle g_1,g_2 \rangle\cong\Z_2\oplus\Z_2$ (cf. \cite[Lemma 4]{Jan2}).
Moreover, we may take the images of $T_{D_1}$ and $T_{D_2}$ by the projection map as the generators of $\pi_1(F',x_0)$.
Then their images in $\mcg^{\ast}(F)$, by the second map in the above exact sequence, are $D_1$ and $D_2$.
Moreover, the conjugation of $D_j$ by $g_i$ $(i,j\in\{1,2\})$ is as follows:
$$
g_1D_jg_1=D_j^{-1},\ \ g_2D_1g_2=D_2.
$$
Hence, by using an argument in \cite[p.136--139]{Joh3},
we obtain the following group presentation of $\mcg^{\ast}(F)$.
$$
\mcg^{\ast}(F)=\langle D_1,D_2,g_1,g_2\mid g_i^2, [g_1,g_2], 
g_1D_jg_1=D_j^{-1},g_2D_1g_2=D_2\ (i,j\in\{1,2\})\rangle.
$$
Since the conjugation of $b$ by $D_j$ or $g_1$ is $b$,
we obtain the desired result by using an argument in \cite[p.136--139]{Joh3} again.

(2) can be proved similarly.
%By \cite[Lemma 25.2]{Joh}, $\mcg^0(M)$ is an infinite cyclic subgroup generated by $a$.
%
\end{proof}
\vspace{2mm}

Let $M$ be a manifold in Theorem \ref{thm-kob},
and $T$ the union of tori %which separates $M$ into the pieces $M_1$ and $M_2$ (or $M_1$, $M_2$ and $M_3$).
in the theorem.
Let $\D$ be the subgroup of $\mcg(M)$ generated by the all possible Dehn twists along $T$.
Then we obtain the following,
which can be proved by an argument similar to that for \cite[Proposition 15.2]{Bon} or \cite[Lemma 3]{Jan2}.

\begin{lemma}\label{lem-d}
Let $M$ be a manifold in Theorem \ref{thm-kob}.

{\rm (1)} If $M$ satisfies the condition (M1) or (M2), 
then $\D$ is an infinite cyclic group generated by $D_l$,
where $D_l$ is the Dehn twist along (a component of) $T$ in the direction of a longitude of $K$.

{\rm (2)} If $M$ satisfies the condition (M3-1-1),
then $\D\cong \langle D_l\rangle\cong\Z$, 
where $D_l$ is the Dehn twist along (a component of) $T$ in the direction of a longitude of $K$.

{\rm (3)} If $M$ satisfies the condition (M3-1-2), 
then $\D$ is generated by $D_m$ and $D_l$, 
where $D_m$ and $D_l$ are the Dehn twists along $T$ in the direction of a meridian and a longitude of $K$, respectively.
Moreover, $\D\cong\langle D_m\rangle\cong\Z$ if $M_1\in M\ddot{o}[0]$, and 
$\D\cong \langle D_m, D_l\rangle\cong\Z\oplus\Z$ otherwise.

{\rm (4)} If $M$ satisfies the condition (M3-2-1), namely, $M$ is obtained by gluing $M_1\in A[r]$ ($r=0,1,2$) and $M_2=E(K)=A(1/n)$ so that the regular fibers of $M_1$ are identified with the meridians of $K$,
then $\D$ is an abelian group generated by $D_m$ and $D_l$, 
where $D_m$ and $D_l$ are the Dehn twists along (a component of) $T$ in the direction of a meridian and a longitude of $K$, respectively.
%Moreover, 
%the subgroup of $\D$ generated by $D_l$ (or $D_m$) is either a trivial group or a (finite or infinite) cyclic group.
%\begin{itemize}
%\item[{\rm (4-1)}] $\D=\langle D_m,D_l\rangle\cong\Z\oplus\Z$ if $r\neq 0$ and $n\neq \pm 1$,
%\item[{\rm (4-2)}] $\D=\langle D_m\rangle\cong\Z$ if $r\neq 0$ and $n=\pm 1$,
%\item[{\rm (4-3)}] $\D=\langle D_m,D_l\mid D_m^p\rangle\cong\Z_p\oplus\Z$ for some integer $p$ if $r=0$ and $n\neq \pm 1$, and
%\item[{\rm (4-4)}] $\D=\langle D_m,D_l\mid D_m^p,D_l^q\rangle\cong\Z_p\oplus\Z_q$ for some integers $p$ and $q$ if $r=0$ and $n=\pm 1$.
%\end{itemize}

{\rm (5)} If $M$ satisfies the condition (M3-2-2), 
then $\D$ is generated by $D_{m_1}, D_{l_1}$ and $D_{l_2}$,
where $D_{m_i}$ and $D_{l_i}$ are the Dehn twists along a component $T_i$ ($i=1,2$) of $T$ in the direction of a meridian and a longitude of $K$, respectively.
Moreover, $\D\cong \langle D_{m_1}, D_{l_1}\rangle\cong\Z\oplus\Z$ if $M_1\in A[0]$, and
$\D\cong \langle D_{m_1}, D_{l_1}, D_{l_2}\rangle\cong\Z\oplus\Z\oplus\Z$ otherwise.

{\rm (6)} If $M$ satisfies the condition (M4), 
then $\D\cong \langle D_{l_1}, D_{l_2}\rangle\cong\Z\oplus\Z$, 
where $D_{l_i}$ is the Dehn twist along a component $T_i$ ($i=1,2$) of $T$ in the direction of a longitude of $K$.
\end{lemma}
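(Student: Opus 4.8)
The plan is to compute $\D$ exactly as in the proofs of \cite[Proposition 15.2]{Bon} and \cite[Lemma 3]{Jan2}, by analysing $\mcg(M)$ through the torus decomposition and feeding in the relevant portions of the mapping class groups of the individual pieces. First observe that Dehn twists along $T$ pairwise commute: twists along distinct components of $T$ have disjoint supports, while for a fixed component $T_i$ two twists in directions $\gamma,\gamma'\in H_1(T_i;\Z)$ can be realised inside a single collar $T_i\times[0,1]$ of $T_i$ by maps of the form $(x,t)\mapsto(x+f(t)\gamma,t)$, so that $D_\gamma D_{\gamma'}=D_{\gamma+\gamma'}$ up to isotopy. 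Hence $\D$ is abelian and $(\gamma_i)_i\mapsto\prod_i D_{\gamma_i}$ defines a surjection $\bigoplus_{T_i\subset T}H_1(T_i;\Z)\twoheadrightarrow\D$; in particular $\D$ is finitely generated abelian of rank at most $2$ times the number of components of $T$.

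It then remains to identify the kernel of this surjection and to exclude torsion. Pushing the support of a twist $D_\gamma$ along $T_i$ off $T_i$ into one of the adjacent pieces shows, exactly as in the cited references, that $D_\gamma=\id$ in $\mcg(M)$ precisely when $\gamma$ lies in the subgroup $R_a+R_b\subseteq H_1(T_i;\Z)$ (resp.\ $R_a$ when $T_i$ is non-separating with single adjacent piece $M_a$), where for a piece $M_c$ adjacent to $T_i$ the group $R_c$ is the image, under restriction to $T_i$ (using $\pi_1(\mathrm{Homeo}(T_i),\id)\cong H_1(T_i;\Z)$), of $\pi_1$ of the group of fibre-preserving self-homeomorphisms of $M_c$ that restrict to the identity on $\partial M_c\setminus T_i$; equivalently $R_c=\ker\bigl(H_1(T_i;\Z)\to\mcg(M_c\ \text{rel}\ \partial M_c\setminus T_i),\ \gamma\mapsto D_\gamma\bigr)$. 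Thus $\D\cong\bigl(\bigoplus_{T_i}H_1(T_i;\Z)\bigr)\big/\textstyle\sum_{T_i}(R_a+R_b)$, and the computation reduces to determining the various $R_c$. If $M_c$ is one of the hyperbolic pieces in (M1), (M2), (M3-1-2), (M3-2-2) or (M4), then by Mostow rigidity its homeomorphism group is homotopy equivalent to the finite group $\isom(M_c)$, so $R_c=0$; moreover the boundary Dehn twists of such an $M_c$ generate a free abelian subgroup of $\mcg(M_c\ \text{rel}\ \partial M_c)$, which is what forces the surviving generators of $\D$ to have infinite order. If $M_c$ is a Seifert piece --- $D[r]$, $M\ddot{o}[r]$ or $A[r]$ --- then the split exact sequence $1\to\mcg^0(M_c)\to\mcg(M_c)\to\mcg^{\ast}(F)\to1$ of \cite[\S25]{Joh}, together with Lemma \ref{lem-mcg-sfs2} and its analogues for $M\ddot{o}[0,1]$ and $A[0,1]$, shows that $D_\gamma$ can only become trivial when $\gamma$ is a regular-fibre slope of $M_c$ on $T_i$, and pins down the relevant multiple of it; for $D[2]$ and $D[3]$ this yields $R_c=\langle m\rangle$, where $m$ is the fibre slope, which by the gluing hypothesis is a meridian of $K$.

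Running through the seven conditions, one reads $\D$ off from the displayed quotient and matches generators. In (M1) and (M2), $T$ is a single torus adjacent to one hyperbolic piece and one $D[2]$ (or $D[2]\cup D[3]$) piece with fibre $m$, so $\D\cong H_1(T)/\langle m\rangle=\langle D_l\rangle\cong\Z$. In (M4), each of the two tori is adjacent to a $D[2]$ piece (relation $D_{m_i}=\id$) and a hyperbolic piece (no relation), so $\D\cong\langle D_{l_1},D_{l_2}\rangle\cong\Z\oplus\Z$. The sub-cases in which one of the pieces is homeomorphic to $S^1\times S^1\times I$ are handled exactly as in the proof of Theorem \ref{thm-kob}: $T$ is then a single non-separating torus bounding the other piece on both sides, and the product structure of the $S^1\times S^1\times I$ piece contributes the relation equating the two induced slopes. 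The remaining conditions (M3-1-1), (M3-1-2), (M3-2-1), (M3-2-2) are treated along the same lines.

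The main obstacle is the slope bookkeeping for the pieces that are \emph{themselves} Seifert fibred on both sides of the crucial torus --- namely the torus-knot and torus-link exteriors $E(T(2,n))$ in (M3-1-1) and (M3-2-1), whose boundary regular-fibre slope is the cabling slope and not the meridian, and the twisted $I$-bundle over the Klein bottle $M\ddot{o}[0]$ in (M3-1-2), which carries two distinct Seifert fibrations and hence a priori two fibre-rotation contributions to $R_c$ --- together with the verification that in each of these the quotient still comes out to be exactly the stated group, in particular that $\D$ acquires no torsion and that the prescribed $D_m,D_l$ (or $D_{m_i},D_{l_i}$) generate it. Here one invokes the precise mapping class group computations and fibre-slope identifications carried out in \cite{Jan2} (and implicit in \cite{Kob,Kob2}); this is the only part of the argument that is not a routine diagram chase.
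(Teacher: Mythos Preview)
Your overall strategy is exactly the one the paper intends: the paper's own proof is literally the one sentence ``can be proved by an argument similar to that for \cite[Proposition 15.2]{Bon} or \cite[Lemma 3]{Jan2}'', and you are expanding that reference. So the approach is the same.

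There is, however, a genuine error in your expansion. Your displayed description of the kernel,
\[
\D\cong\Bigl(\bigoplus_{T_i}H_1(T_i;\Z)\Bigr)\Big/\sum_{T_i}(R_a+R_b),
\]
localises the relations to one torus $T_i$ at a time. This is only correct when every piece meets $T$ in a single component. In cases (M3-2-1) and (M3-2-2) the Seifert piece $M_1\in A[r]$ has \emph{two} boundary tori $T_1,T_2$, and for $r\ge 1$ one checks (e.g.\ by doubling along $T_2$) that $R_{M_1}^{T_i}=0$ with your definition; together with $R_{M_2}^{T_i}=0$ from the hyperbolic side in (M3-2-2), your formula then yields $\D\cong\Z^4$, contradicting the stated $\Z^3$. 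What is missing is the relation $D_{m_1}D_{m_2}^{\pm1}=1$ coming from the global fibre rotation of $A[r]$, which is a loop in $\mathrm{Homeo}_0(M_1)$ that moves \emph{both} boundary components simultaneously and hence contributes an element of $H_1(T_1)\oplus H_1(T_2)$ not supported on a single $T_i$. The correct kernel (and this is what \cite[Proposition 15.2]{Bon} actually gives) is
\[
\sum_{c}\ \mathrm{Im}\Bigl(\pi_1\bigl(\mathrm{Homeo}_0(M_c)\bigr)\longrightarrow\bigoplus_{T_i\subset\partial M_c}H_1(T_i;\Z)\Bigr),
\]
summed over the pieces $M_c$ rather than over the tori $T_i$. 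With this correction your case analysis goes through; in particular the generator of $\pi_1(\mathrm{Homeo}_0(A[r]))\cong\Z$ maps to $(m_1,\pm m_2)$, which kills exactly one rank and yields $\langle D_{m_1},D_{l_1},D_{l_2}\rangle\cong\Z^3$ as required. The same adjustment is needed in (M3-2-1), where both $M_1\in A[r]$ and $M_2=E(K)\in A[1]$ have two boundary components.
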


%In the above lemma, $K$ denotes the knot or link 
%which arises in the corresponding case in Theorem \ref{thm-kob}.

We define some self-homeomorphisms of $M$ when $M$ satisfies the condition (M3-1-1) or (M3-2-1) as follows.

\begin{definition}\label{def-homeo}
{\rm
Let $M$ be a manifold which satisfies the condition (M3-1-1) or (M3-2-1) with $r=2$. 

(1) When $M$ satisfies the condition (M3-1-1),
we define self-homeomorphisms $G_1$, $G_2$ and $B$ of $M$ as follows.
\begin{eqnarray*}
\begin{array}{lll}
G_1|_{M_1}=g_1, & G_1|_{M_2}=f, & G_1|_{T\times[1,2]}=R,\\
G_2|_{M_1}=g_2, & G_2|_{M_2}=id, & G_2|_{T\times[1,2]}=R_lD^{1/2}_{l},\\
B|_{M_1}=b, & B|_{M_2}=id, & B|_{T\times[1,2]}=R_mD^{1/2}_{m},\\
D_j|_{M_1}=D_j, & D_j|_{M_2}=id, & D_j|_{T\times[1,2]}=id\hspace{5mm}(j=1,2).
\end{array}
\end{eqnarray*}
Here, $g_1$, $g_2$ and $b$ are involutions of $M_1$ 
as described in Lemma \ref{lem-mcg-sfs2}, 
$f$ is an involution of $M_2=E(K)$ which gives a strong inversion of the torus knot $K$ (see \cite[Remark 7]{Jan2}),
and $R$ and $R_{\alpha}$ ($\alpha=m$ or $l$) are the self-homeomorphisms of $T\times [1,2]$ defined by
$R([\vec{x}],t)=([-\vec{x}],t)$ and $R_{\alpha}([\vec{x}],t)=([\vec{x}+\frac{1}{2}\vec{\alpha}],t)$, respectively.
Here, we identify $T$ with $\R^2/\Z^2$ and
$[\vec{x}]$ denotes the point of $\R^2/\Z^2$ determined by $\vec{x}\in \R^2$.
In the identity $D_j|_{M_1}=D_j$,
the right-hand side represents the homeomorphisms in Lemma \ref{lem-mcg-sfs2} (1).
The symbols $D_l$ and $D_m$ denote the Dehn twists given in Lemma \ref{lem-d} (2) and (4),
and $D^{1/2}_l$ and $D^{1/2}_m$ denote the half Dehn twists in the direction of $l$ and $m$, respectively
(see \cite[Section 5]{Jan2} for definition of half Dehn twists).
\vspace{2mm}

(2) When $M$ satisfies the condition (M3-2-1),
we define self-homeomorphisms $H_1$ and $H_2$ of $M$ as follows.
\begin{eqnarray*}
\begin{array}{lll}
H_1|_{M_1}=h_1, & H_1|_{M_2}=h_1, & H_1|_{T\times[1,2]}=R,\\
H_2|_{M_1}=h_2, & H_2|_{M_2}=h_2, & H_2|_{T\times[1,2]}=h_2|_T\times[1,2],\\
D_j'|_{M_1}=D_j', & D_j'|_{M_2}=id, & D_j'|_{T\times[1,2]}=id.
\end{array}
\end{eqnarray*}
Here, $h_1$ is an involution of $M_1$ as described in Lemma \ref{lem-mcg-sfs2},
$h_2$ is an involution of $M_2=E(K)$ which gives a strong inversion of the torus link $K$ (see \cite[Lemma 4 (3)]{Jan2}),
and $R$ is the self-homeomorphism of $T\times [1,2]$ defined in (1).
In the identity $D_j'|_{M_1}=D_j'$,
the right-hand side represents the homeomorphisms in Lemma \ref{lem-mcg-sfs2} (2).
}
\end{definition}

In \cite[Proposition 6]{Jan2},
the author calculated $\mcg(M)$ for certain manifolds in Theorem \ref{thm-kob}
by using \cite[Theorem 15.1]{Bon} and \cite{Sak3}.
The following theorem can be obtained by a similar argument
together with Lemmas \ref{lem-mcg-sfs2} and \ref{lem-d}.

\begin{theorem}\label{thm-mcg}
Let $M$ be a manifold which satisfies the condition (M3-1-1) or (M3-2-1) with $r=2$. 

(1) If $M$ satisfies the condition (M3-1-1),
then the subgroup $\langle D_1,D_2\rangle$ of $\mcg(M)$ generated by $D_1$ and $D_2$ 
is a free group of rank 2.

(2) If $M$ satisfies the condition (M3-2-1),
then the subgroup $\langle D_1',D_2',D_l\rangle$ of $\mcg(M)$ 
is the direct product of the infinite cyclic group generated by $D_l$ and the free group of rank 2 generated by $D_1'$ and $D_2'$.
\end{theorem}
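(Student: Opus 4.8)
The plan is to mimic the structure of the proof of \cite[Proposition 6]{Jan2}, replacing the appeal to the full mapping class group calculation with a more targeted statement about the subgroup generated by the relevant Dehn twists. The starting point is the decomposition $M=M_1\cup_T M_2$ from Theorem \ref{thm-kob}(3), where $M_1$ is the Seifert piece in $M\ddot{o}[2]$ (case (1)) or $A[2]$ (case (2)) and $M_2=E(K)$ is the Seifert fibered exterior of the torus knot (resp. link) of type $(2,n)$. By \cite[Theorem 15.1]{Bon}, an element of $\mcg(M)$ is determined by the induced mapping classes on $M_1$, on $M_2$, and on the collar $T\times[1,2]$, subject to the obvious compatibility conditions along the two copies of $T$; the subtlety is that the gluing may identify elements of $\mcg(M_1)$ and $\mcg(M_2)$ that act in the same way on $T$ (the "flip" along $T$), together with the Dehn twists $\D$ computed in Lemma \ref{lem-d}. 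Since $D_1,D_2$ (resp. $D_1',D_2'$) are supported in $M_1$ and act trivially on $T$, and $D_l$ is a Dehn twist along $T$, the subgroup in question lies inside the kernel of the map recording the action on $M_2$ away from the collar, which lets me reduce to understanding the image inside $\mcg(M_1)$ together with the collar twist.

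Concretely, first I would set up the restriction homomorphism $\rho:\mcg(M)\to\mcg(M_1)$ obtained by cutting along $T$ and closing up (well-defined on the subgroup preserving each piece). By Lemma \ref{lem-mcg-sfs2}(1) the images $\rho(D_1),\rho(D_2)$ generate a free group of rank $2$ inside $\langle D_1,D_2\rangle\leq \mcg(M\ddot{o}(\beta_1/\alpha_1,\beta_2/\alpha_2))$, so for part (1) it suffices to check that $\rho$ restricted to $\langle D_1,D_2\rangle\leq\mcg(M)$ is injective, equivalently that no nontrivial word in $D_1,D_2$ becomes isotopic to the identity in $M$. The only relations that can be introduced when passing from $\mcg(M_1)$ to $\mcg(M)$ come from (a) isotopies of $M_1$ that do not extend over $M$ being killed — this only removes elements, it does not add relations among the $D_j$ — and (b) elements of $\mcg(M_1)$ that act trivially on $T$ but become isotopic in $M$ to a collar twist or an $M_2$-supported map; but $D_1,D_2$ already act trivially on $T$ and are supported in the interior of $M_1$, so such an identification would force a nontrivial word in $D_1,D_2$ to be isotopic rel $T$ to the identity in $M_1$, contradicting freeness in Lemma \ref{lem-mcg-sfs2}(1). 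This is the heart of the argument and the step I expect to be the main obstacle: making precise, via \cite[Theorem 15.1]{Bon} and the discussion of the flip involution in \cite[Section 5]{Jan2}, that the "new" relations in $\mcg(M)$ genuinely do not involve the $D_j$'s and hence do not disturb the free subgroup.

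For part (2) the same reduction applies, but now I also need to track $D_l$, the Dehn twist along $T$ in the longitudinal direction. By Lemma \ref{lem-d}(4) the twist subgroup $\D$ is abelian and contains $D_l$; since $M_2=A(1/n)$ is Seifert fibered over the annulus with one exceptional fiber and its regular fiber is identified with the meridian of $K$, the longitude $l$ of $K$ is not a fiber-slope on the $M_2$-side, so $D_l$ is not absorbed into a fiber-preserving self-map of $M_2$ and generates an infinite cyclic subgroup that is detected, e.g., by its action on $\pi_1$ or by the homology of the corresponding cyclic cover. I would then argue that $\langle D_l\rangle$ is central in (or at least normal with the right quotient inside) $\langle D_1',D_2',D_l\rangle$: conjugating $D_l$ by $D_1'$ or $D_2'$ changes it only by the action of those twists on $T$, which is trivial since $D_1',D_2'$ are supported in the interior of $M_1$, so $[D_l,D_j']=1$. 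Combined with the freeness of $\langle \rho(D_1'),\rho(D_2')\rangle$ from Lemma \ref{lem-mcg-sfs2}(2), this gives a surjection from $\Z\times(F_2)$ onto $\langle D_1',D_2',D_l\rangle$; injectivity follows because any relation would have to project (under $\rho$, together with the collar-twist count) to a relation in the free group after killing the $D_l$-exponent, while the $D_l$-exponent is itself recovered from the collar data — so the map is an isomorphism onto the asserted direct product $\langle D_l\rangle\times\langle D_1',D_2'\rangle$. Finally I would note that these arguments are exactly parallel to \cite[Proposition 6]{Jan2} with \cite{Sak3} supplying the needed facts about $\mcg(M_2)$, so the remaining bookkeeping is routine.
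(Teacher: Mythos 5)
Your proposal is correct and follows essentially the same route as the paper: both rest on the Bonahon--Siebenmann/Sakuma exact sequence $1\to\D\to\mcg(M)\to\Delta\to 1$ (your restriction map $\rho$ is just the composition with projection to $\mcg(M_1)$), on Lemma \ref{lem-mcg-sfs2} to see that $\langle D_1,D_2\rangle$ (resp.\ $\langle D_1',D_2'\rangle$) is already free of rank $2$ in $\mcg(M_1)$, and on the observation that $D_l$ lies in $\D$ and commutes with the $D_j'$. The step you flag as the main obstacle is exactly what the exact sequence delivers: a nontrivial reduced word in $D_1,D_2$ maps to a nontrivial element of $\Delta$, hence cannot be trivial in $\mcg(M)$.
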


\begin{proof}
Recall from \cite[Theorem 15.1]{Bon} and \cite{Sak3} (cf. \cite[Section 5]{Jan2})
that there is an exact sequence
\begin{equation}\label{exact}
1\rightarrow \D\rightarrow \mcg(M)\rightarrow \Delta\rightarrow 1,
\end{equation}
where $\Delta$ is the subgroup of $\mcg(M_1)\times \mcg(M_2)$ consisting of all elements $(f_1,f_2)$
such that $f_1|_T$ is isotopic to $f_2|_T$.

(1) Let $M$ be a manifold which satisfies the condition (M3-1-1).
Then we have $\mcg(M_1)=$\\$\langle b\rangle\times(\langle D_1,D_2\rangle\rtimes\langle g_1,g_2\rangle)$ by Lemma \ref{lem-mcg-sfs2} (1), and $\mcg(M_2)=\langle f\rangle\cong \Z_2$ by \cite[Lemma 4 (1)]{Jan2}.
Note that the subgroup of $\mcg(M_1)$ generated by $D_1$ and $D_2$ is a free group of rank 2.
Hence, we see that 
$$\Delta=\langle (b,id)\rangle\times(\langle (D_1,id),(D_2,id)\rangle\rtimes\langle (g_1,f),(g_2,id)\rangle)$$ 
and the subgroup of $\Delta$ generated by $(D_1,id)$ and $(D_2,id)$ is also a free group of rank 2.
Hence, we see from the exact sequence (\ref{exact}) that the subgroup of $\mcg(M)$ generated by $D_1$ and $D_2$ is a free group of rank 2.

(2) Let $M$ be a manifold which satisfies the condition (M3-2-1).
Then we have $\mcg(M_1)=$\\$\langle a\rangle\rtimes(\langle D_1',D_2'\rangle\rtimes\langle h_1,h_2\rangle)$ by Lemma \ref{lem-mcg-sfs2} (2).
On the other hand, we have $\mcg(M_2)=\langle a\rangle\rtimes\langle h_1,$\\$h_2\rangle$ (cf. \cite[Lemma 4 (2)]{Jan2}).
Note that the subgroup of $\mcg(M_1)$ generated by $D_1'$ and $D_2'$ is a free group of rank 2.
We see that 
$$\Delta=\langle (D_1',id),(D_2',id)\rangle\rtimes\langle (h_1,h_1),(h_2,h_2)\rangle$$ 
and the subgroup of $\Delta$ generated by $(D_1',id)$ and $(D_2',id)$ is also a free group of rank 2.
Hence, we see from the exact sequence (\ref{exact}) that the subgroup of $\mcg(M)$ generated by $D_1'$ and $D_2'$ is a free group of rank 2.
On the other hand, the subgroup of $\mcg(M)$ generated by $D_l$ is an infinite cyclic group by Lemma \ref{lem-d} (3).
Since we can easily see that $D_l$ commutes with $D_1'$ and $D_2'$,
we obtain the desired result.
\end{proof}

\begin{remark}\label{rmk-mcg}
{\rm
Let $M$ be a manifold in Theorem \ref{thm-mcg}. 

(1) If $M$ satisfies the condition (M3-1-1), 
then $\mcg(M)=\langle B\rangle\rtimes(\langle D_1,D_2\rangle\rtimes\langle G_1,G_2\rangle)$, and has a group presentation
\begin{align*}
\begin{array}{rll}
\mcg(M)=\langle D_1,D_2,G_1,G_2,B%,D_m
&|&
G_i^2, [G_1,G_2], G_1D_jG_1=D_j^{-1}, G_2D_1G_2=D_2^{-1},\\
&&B^2=D_m, [G_1,B]=D_m^{-1}, [G_2,B], [D_j,B]%,\\
%&&D_jD_mD_j^{-1}=D_m, G_1D_mG_1=D_m^{-1},\\ 
%&&G_2D_mG_2=D_m, BD_mB^{-1}=D_m
\ (i,j\in\{1,2\})\rangle.
\end{array}
\end{align*}

(2) If $M$ satisfies the condition (M3-2-1), 
then $\mcg(M)=\langle D_m,D_l\rangle\rtimes(\langle D_1',D_2'\rangle\rtimes\langle H_1,H_2\rangle)$, and has a group presentation
\begin{align*}
\begin{array}{rll}
\mcg(M)=\langle D_1',D_2',H_1,H_2,D_m, D_l&|&
H_i^2, [H_1,H_2], H_1D_j'H_1=D_j'^{-1}, H_2D_1'H_2=D_2'^{-1},\\
&&D_j'D_mD_j'^{-1}=D_m, D_j'D_lD_j'^{-1}=D_l,\\
&&H_1D_mH_1=D_m^{-1}, H_2D_mH_2=D_m, H_iD_lH_i=D_l^{-1}\\
&&(i,j\in\{1,2\})\rangle.
\end{array}
\end{align*}
}
\end{remark}

%%%%%(Proof of Theorem \ref{thm-main})%%%%%%%%%%%%%%%%%%%%%%%%%%%%%%%%%%%%%%%%%%%%%%%%%%%%%%%%%%%%%%%%%%%%%%%

\section{Proof of Theorem \ref{thm-main}}\label{sec-proof}

Since the if part is already proved in \cite{Jan},
we prove the only if part.
Namely, we show that any prime, unsplittable 3-bridge link 
which admits infinitely many 3-bridge spheres up to isotopy
is equivalent to a link $L(q/2p;\beta_1/\alpha_1,\beta_2/\alpha_2)$ in Figure \ref{fig-3b-inf} with $q\not\equiv 1\pmod{p}$ and $|\alpha_1|>1$ (or $|\alpha_2|>1$).

Let $L$ be a prime, unsplittable 3-bridge link in $S^3$, and
assume that $L$ admits infinitely many 3-bridge spheres up to isotopy.
Let $M=M_2(L)$ be the double branched cover of $S^3$ branched along $L$
and $\tau_L$ the covering transformation.
By Proposition \ref{prop-hs-3b}, 
$M$ admits infinitely many genus-2 Heegaard surfaces, up to isotopy,
whose hyper-elliptic involutions are $\tau_L$.
By \cite[Theorem 1.1]{Li}, $M$ is toroidal, and hence,
either $M$ is a Seifert fibered space or $M$ admits a nontrivial torus decomposition.

\begin{case}\label{case-1}
$M$ is a Seifert fibered space.
\end{case}

By the orbifold theorem \cite{Boi7, Coo} together with \cite[Section 5]{Dun}, 
$L$ is a {\it generalized Montesinos link} or a {\it Seifert link}, 
that is, either $L$ is equivalent to a link in Figure \ref{fig-gen-mont} or 
$S^3\setminus L$ admits a Seifert fibration.
%
%%%%%%%%%%%%%
\begin{figure}
\begin{center}
\includegraphics*[width=6.2cm]{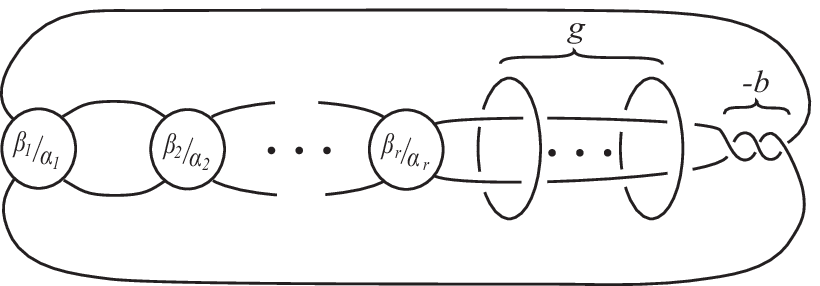}
\end{center}
\caption{}
\label{fig-gen-mont}
\end{figure}
%%%%%%%%%%%%%
%

Assume first that $L$ is a generalized Montesinos link.
By \cite[Theorem 2.1]{Boi3},
$L$ is equivalent to one of the links in Figure \ref{fig-case1-1} since $L$ is a 3-bridge link.
%
%%%%%%%%%%%%%
\begin{figure}
\begin{center}
\includegraphics*[width=10.3cm]{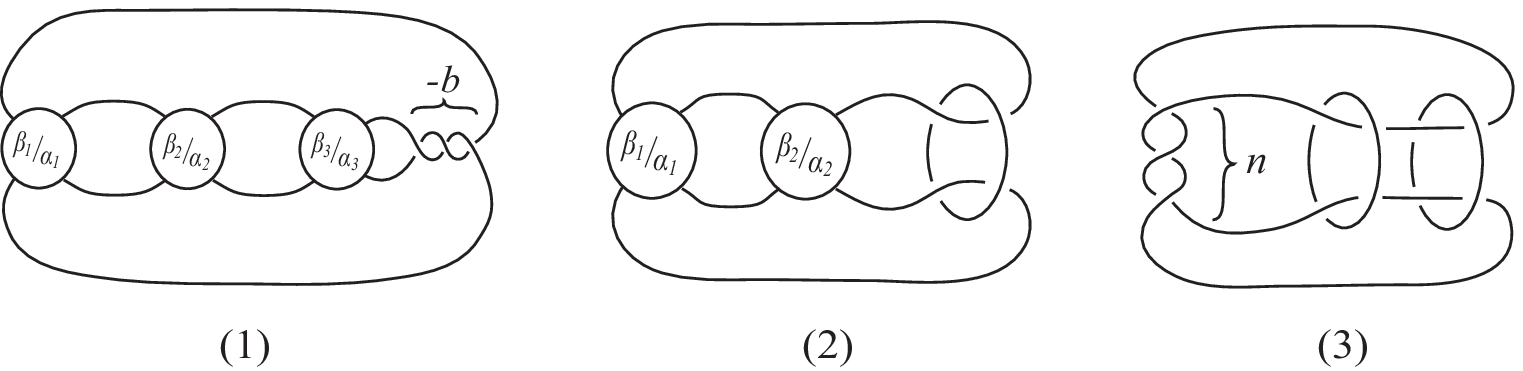}
\end{center}
\caption{}
\label{fig-case1-1}
\end{figure}
%%%%%%%%%%%%%
%
By \cite{Jan2,Jan3}, 
$L$ admits at most six 3-bridge spheres, at most two 3-bridge spheres or a unique 3-bridge sphere up to isotopy
according as $L$ is equivalent to the link in Figure \ref{fig-case1-1} (1), (2) or (3).
This contradicts the assumption that $L$ admits infinitely many 3-bridge spheres up to isotopy.

Next, assume that $L$ is a Seifert link.
By \cite{Bur2} and by the assumption that $L$ is a 3-bridge link, 
we see that $L$ is equivalent to a (nontrivial) $(3,n)$-torus link
or the union of a $(2,n)$-torus knot and its core of index 2.

If $L$ is equivalent to a $(3,n)$-torus knot 
or the union of a $(2,n)$-torus knot and its core of index 2,
then $M$ is a small Seifert fibered space
and admits at most four genus-2 Heegaard surfaces up to isotopy by \cite{Boi},
which is a contradiction.
We also prove the following proposition in Section \ref{sec-toruslink}.

\begin{proposition}\label{prop-toruslink}
If $L$ is a $(3,3n')$-torus link for some nonzero integer $n'$, 
then $L$ admits a unique 3-bridge sphere up to isotopy.
\end{proposition}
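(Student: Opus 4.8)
The plan is to pass to the double branched cover and show that the $(3,3n')$-torus link fails to meet the numerical criterion that governs infinitely many 3-bridge spheres. Concretely, let $M=M_2(L)$ be the double branched cover of $S^3$ along the $(3,3n')$-torus link $L$. The first step is to identify $M$ explicitly: since the $(3,3n')$-torus link is a Seifert link with $3$ components sitting on a standard torus, $M$ is a Seifert fibered space over $S^2$ with a small number of exceptional fibers (in fact one computes the Seifert invariants directly from a genus-$1$ 3-bridge-like presentation of $L$, or from the fact that $L$ is the preimage of the branch set of a torus fibration). I would first check whether $M$ is actually Haken with a nontrivial torus decomposition or a ``small'' Seifert fibered space; the expectation, matching Remark~\ref{rmk-dbc} and the shape of the links $L(q/2p;\beta_1/\alpha_1,\beta_2/\alpha_2)$, is that the relevant comparison is with the manifolds in condition (M3-1-1) or (M3-2-1) of Section~\ref{sec-hs}, i.e. the case where the hyperbolic piece is replaced by a $(2,n)$-torus knot or link exterior.

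The key step is then a counting argument via the map $\Phi_L$ of Proposition~\ref{prop-hs-3b}. Since $L$ is a torus link and in particular not a non-elliptic Montesinos link, $\Phi_L$ is injective, so it suffices to bound the number of isotopy classes of genus-$2$ Heegaard surfaces of $M$ whose hyper-elliptic involution is $\tau_L$. I would use the exact sequence (\ref{exact}) and the mapping-class-group computations of Theorem~\ref{thm-mcg} and Lemma~\ref{lem-d}: the set of Heegaard surfaces with a fixed hyper-elliptic involution is controlled by the orbit of a fixed surface under $\mcg(M)$, and the ``Dehn twist'' subgroup $\D$ acts on this orbit. The link $L(q/2p;\dots)$ produces infinitely many spheres precisely because the relevant half Dehn twist $D_m^{1/2}$ (or $D_l^{1/2}$) is realized by a homeomorphism preserving $\tau_L$ while moving the Heegaard surface — this is the role of $B$, $G_1$, $G_2$ in Definition~\ref{def-homeo}. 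For the $(3,3n')$-torus link I expect that the special symmetry of the torus link forces the stabilizer of $\tau_L$ inside $\mcg(M)$ to be so large (or the number of $\tau_L$-invariant Heegaard surfaces to be so small) that only one isotopy class survives. In practice: one fixes the standard genus-$2$ Heegaard surface $F_0$ of $M$ coming from the 3-bridge sphere of $L$, observes that any $\tau_L$-invariant genus-$2$ Heegaard surface is, by Theorem~\ref{thm-kob}, isotopic into the form described there with $M_1$ Seifert fibered and $M_2$ the $(2,n)$-torus-knot (or $(2,n)$-torus-link) exterior, and then checks that the $\D$-orbit consists of mutually isotopic surfaces because the would-be nontrivial translation is absorbed by an isotopy of $M$ — equivalently, $q\equiv 1\pmod p$ in the notation of Theorem~\ref{thm-main}, which is exactly the degenerate value at which the family $L(q/2p;\dots)$ collapses.

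The main obstacle will be the last point: showing that the half-twist homeomorphism, which in the general case generates the infinite family, acts trivially up to isotopy on the Heegaard surface when $L$ is the $(3,3n')$-torus link. This is where one must use the precise Seifert data of $M$ together with the presentations in Remark~\ref{rmk-mcg} to see that $D_m$ (resp. $D_l$) is already isotopic to the identity in $M$, or that it is realized inside the image of $\Delta$ rather than contributing a new coset. I would handle this by a direct normal-form computation in $\mcg(M)$: write the hyper-elliptic involution $\tau_L$ as a word in the generators, conjugate $F_0$ by arbitrary elements of $\mcg(M)$, and use the relations $B^2=D_m$, $[G_1,B]=D_m^{-1}$ (condition (M3-1-1)) — or their (M3-2-1) analogues — to show every such conjugate lies in a single isotopy class. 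A short supporting lemma, that $M_2(L)$ for the $(3,3n')$-torus link is Seifert fibered over the disk with at most two exceptional fibers glued to a $(2,n)$-torus-knot exterior with the meridian matching the regular fiber (so that we are genuinely in case (M3-1-1) with the exceptional-fiber count equal to the minimum), will also be needed, and I expect this to follow from the explicit Seifert fibration of the torus-link exterior and the fact that double branched covers of torus links are Seifert fibered spaces.
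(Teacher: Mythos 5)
Your approach has a genuine gap, and it starts with the identification of the double branched cover. For $L=T(3,3n')$ the cover $M=M_2(L)$ is the Brieskorn manifold $\Sigma(2,3,3n')$, a \emph{closed Seifert fibered space}; in the case division of Section \ref{sec-proof} this lands in Case \ref{case-1}, not Case \ref{case-2}. A closed Seifert fibered space has trivial torus decomposition, so Theorem \ref{thm-kob} and with it the entire apparatus you want to invoke (Lemma \ref{lem-d}, Theorem \ref{thm-mcg}, Remark \ref{rmk-mcg}, the exact sequence (\ref{exact})) simply does not apply. In particular your proposed supporting lemma -- that $M$ is of type (M3-1-1), a $D[2]$-type piece glued to a $(2,n)$-torus-knot exterior along meridian $=$ regular fiber -- is false: that gluing is not fiber-preserving (the meridian of a torus knot is not a fiber of its exterior), so a manifold of type (M3-1-1) is a genuine graph manifold with two JSJ pieces, whereas $M_2(T(3,3n'))$ carries a global Seifert fibration. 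There is no half-Dehn-twist $B$, $G_1$, $G_2$ bookkeeping to do, and no congruence $q\equiv 1\pmod p$ to check.

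Once $M$ is recognized as Seifert fibered, your plan collapses to: classify the genus-$2$ Heegaard surfaces of this particular Seifert fibered space and decide which carry $\tau_L$ as hyper-elliptic involution. The only tool of that kind available in the paper is the Boileau--Collins--Zieschang bound, which applies to \emph{small} Seifert fibered spaces; $\Sigma(2,3,3n')$ is not small (e.g.\ $\Sigma(2,3,6)$ is a circle bundle over the torus), so no such bound is cited or available, and supplying one is essentially the whole difficulty. This is precisely why the paper proves the proposition by a direct argument in $S^3$ instead: it puts an arbitrary $3$-bridge sphere $S$ in good position with respect to the standard torus $T$ containing $L$, analyzes the arcs of $S\cap A_i$ in the three annuli $A_1,A_2,A_3$ of $T\setminus L$ (conditions (S1)--(S4)), uses a sweep-out/height-function argument on saddle types to reduce every configuration to the one where $S\cap T$ is a pair of curves meeting each component of $L$ once, and then concludes uniqueness from the uniqueness up to isotopy of the resulting meridian disks and annulus in the two solid tori bounded by $T$. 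Nothing in your outline substitutes for that step, so as written the proposal does not yield a proof.
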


Hence, $M$ cannot be a Seifert fibered space.

\begin{case}\label{case-2}
$M$ admits a nontrivial torus decomposition.
\end{case}

If $L$ is an arborescent link, 
then $L$ admits at most four 3-bridge spheres by \cite{Jan3}.
Hence, we assume that $L$ is not an arborescent link.
Then, by Theorem \ref{thm-kob} and \cite[Proof of Theorem 1]{Jan2}, 
$M$ satisfies one of the conditions (M1), (M2), (M3-1-1), (M3-1-2), (M3-2-1), (M3-2-2) and (M4)
introduced at the end of Section \ref{sec-hs}.
Let $T$ be the union of tori as in Theorem \ref{thm-kob}.

\begin{subcase2}\label{case2-1}
$M$ satisfies the condition (M1).
\end{subcase2}

Note that $M$ is obtained by gluing $M_1\in D[2]$ and $M_2=L(p,q)\setminus N(K)$, 
where $K$ is a 1-bridge knot in a lens space $L(p,q)$, 
and that $M_2$ is hyperbolic.
Since $M_1$ is also simple, 
we can see that $T=\partial M_1=\partial M_2$ is the only essential torus in $M$ up to isotopy.
By \cite[Theorem 4]{Joh2}, 
there exist genus-2 Heegaard surfaces $F_1,F_2,\dots,F_n$ of $M$ such that
any genus-2 Heegaard surface $F$ can be obtained from some $F_i$
by applying Dehn twists along $T$.
Recall from Theorem \ref{thm-kob} that
$F\cap M_1$ is an essential saturated annulus of $M_1$ and 
$F\cap M_2$ is a 2-hold torus which gives a 1-bridge presentation of $K$.
Let $\mu$ and $\lambda$ be the meridian and a longitude of $K$,
and denote the Dehn twist along $T$ in the direction of $\mu$ and $\lambda$ 
by $D_{\mu}$ and $D_{\lambda}$, respectively.
Then $F$ is isotopic to $D_{\mu}^{n_1}D_{\lambda}^{n_2}(F_i)$ for some integers $n_1$ and $n_2$.
Note that any genus-2 Heegaard surface meets $T$ in the union of two meridians of $K$.
Hence, $D_{\mu}^{n_1}D_{\lambda}^{n_2}(F_i)=D_{\lambda}^{n_2}(F_i)$.
Note also that $\tau_{F_j}D_{\lambda}\tau_{F_j}=D_{\lambda}^{-1}$ 
because $\tau_{F_j}$ reverses the orientation of $\lambda$ (see Figure \ref{fig-case2-1}).
%
%%%%%%%%%%%%%
\begin{figure}
\begin{center}
\includegraphics*[width=8.3cm]{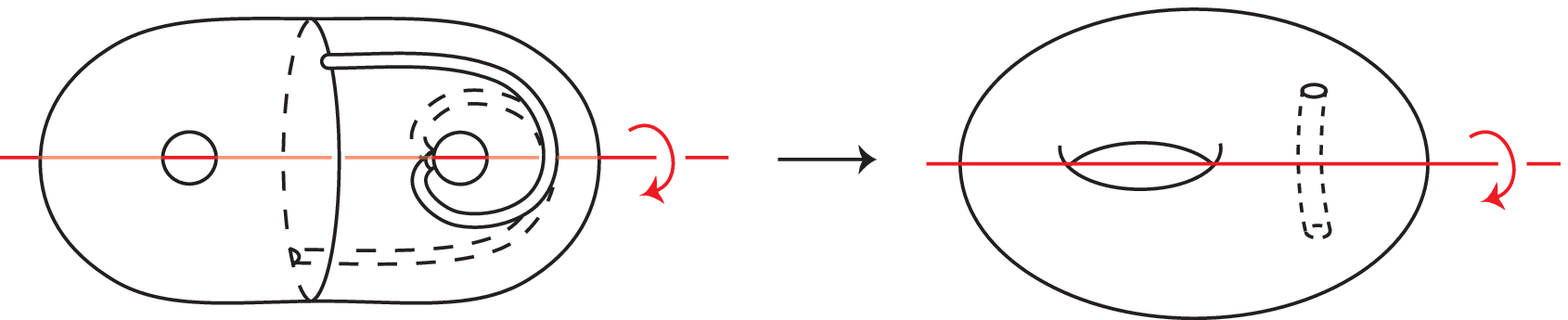}
\end{center}
\caption{}
\label{fig-case2-1}
\end{figure}
%%%%%%%%%%%%%
%
Thus,
$$
\tau_F=\tau_{D_{\lambda}^{n_2}(F_j)}=D_{\lambda}^{n_2}\tau_{F_j}D_{\lambda}^{-n_2}=D_{\lambda}^{2n_2}\tau_{F_j}.
$$
Since $\D$ is an infinite cyclic group generated by $D_{\lambda}$ by Lemma \ref{lem-d} (1),
$\{D_{\lambda}^{2n_2}\tau_{F_j}\}_{n_2\in\Z}$ are mutually distinct,
and hence, there is at most one $n_2\in\Z$ such that 
$D_{\lambda}^{2n_2}\tau_{F_j}=\tau_{L}$.
So, for each Heegaard surface $F_j$,
the hyper-elliptic involution associated with $D_{\lambda}^{n_2}(F_j)$ 
is strongly equivalent to $\tau_L$ 
for at most one $n_2\in\Z$.
Hence, the number of genus-2 Heegaard surfaces whose hyper-elliptic involutions are $\tau_L$
is finite.
This contradicts the assumption.

\begin{subcase2}\label{case2-2}
$M$ satisfies the condition (M2).
\end{subcase2}

Note that $M$ is obtained by gluing $M_1\in D[2]\cup D[3]$ and $M_2=E(K)$, where $K$ is a hyperbolic 2-bridge knot,
so that the regular fiber of $M_1$ is identified with the meridian loop of $K$.
Recall from Theorem \ref{thm-kob} that,
for a genus-2 Heegaard surface $F$,
$F\cap M_1$ is the union of two essential annuli which cuts $M_1$ into three solid tori and
$F\cap M_2$ is the 2-bridge sphere of $K$.
Let $D_{\lambda}$ denote the Dehn twist along $T=\partial M_1=\partial M_2$ in the direction of a longitude of $K$, and
note that $\D$ is an infinite cyclic group generated by $D_{\lambda}$ (see Lemma \ref{lem-d} (1)).

First assume that $M_1\in D[2]$.
Note that $K$ admits a unique 2-bridge sphere up to isotopy by \cite{Sch},
and that $M_1$ contains a unique essential annulus up to isotopy. 
By \cite[Lemma 6]{Jan2}, 
there exist genus-2 Heegaard surfaces $F_0$, $F_1$, $F_2$ and $F_3$ of $M$ such that
any genus-2 Heegaard surface of $M$ is isotopic to $D_{\lambda}^n(F_i)$ for some integer $n$ and for some $i=0,1,2,3$.
(We remark that $F_i=D_{\lambda}^{i/4}(F_0)$.)
Recall that $\tau_{F_i}D_{\lambda}\tau_{F_i}=D_{\lambda}^{-1}$.
By an argument similar to that in Case 2.\ref{case2-1}, 
we can see that the number of genus-2 Heegaard surfaces whose hyper-elliptic involutions are $\tau_L$ is finite, a contradiction.

Next, assume that $M_1\in D[3]$.
Note that $F\cap M_1$ is homeomorphic to one of $G_1,G_2$ and $G_3$ in Figure \ref{fig-annuli}.%
%%%%%%%%%%%%%
\begin{figure}
\begin{center}
\includegraphics*[width=9cm]{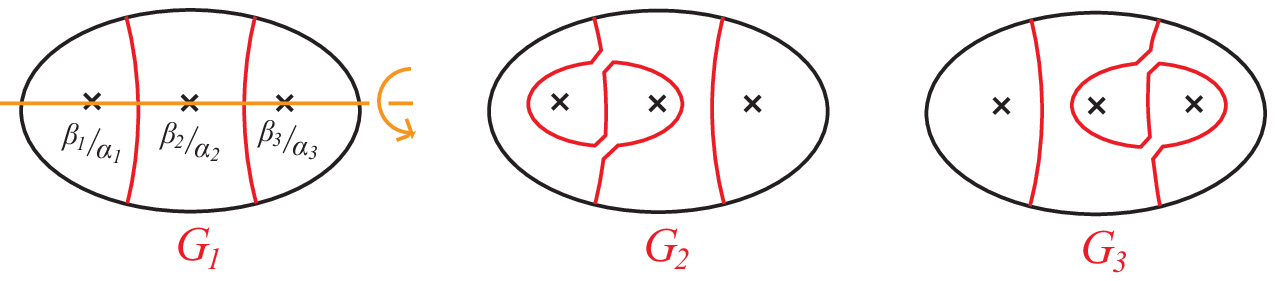}
\end{center}
\caption{}
\label{fig-annuli}
\end{figure}
%%%%%%%%%%%%%
%
To be precise, $F\cap M_1$ is isotopic to $f_1(G_i)$ for some $f_1\in\mcg(M_1)$ and for some $i=1,2,3$.
(We may assume that $f_1|_{\partial M_1}=id$.)
For each $i=1,2,3$,
let $F_i$ be a genus-2 Heegaard surface such that $F_i\cap M_1=G_i$ and $F_i\cap M_2$ is the 2-bridge sphere of $K$.
By \cite[Lemma 6 (1)]{Jan2}, 
any genus-2 Heegaard surface $F$ is isotopic to $D_{\lambda}^{n/4}f(F_i)$ 
for some integer $n$ and for some $i=1,2,3$ and for some homeomorphism $f$ of $M$
which is obtained from some $f_1\in\mcg(M_1)$ by the rule $f|_{M_1}=f_1\in\mcg(M_1)$ and $f|_{M_2}=id$.
Let $F_i^j$ ($j=0,1,2,3$) be the Heegaard surface $D_{\lambda}^{j/4}(F_i)$.
Let $\mcg_0(M)$ be the subgroup of $\mcg(M)$ consisting of all elements whose restrictions to $M_2$ are the identity.
Then the above argument implies that
$F$ is isotopic to $g(F_i^j)$ for some $g\in\mcg_0(M)$ and for some $F_i^j$.

\begin{claim}
For each $F_i^j$, at most one of $\{g(F_i^j)\}_{g\in\mcg_0(M)}$ 
can have $\tau_L$ as hyper-elliptic involution.
\end{claim}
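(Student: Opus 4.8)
The plan is to analyze how a homeomorphism $g\in\mcg_0(M)$ conjugates the hyper-elliptic involution $\tau_{F_i^j}$ of a fixed reference surface $F_i^j$, using the general principle that $\tau_{g(F_i^j)}=g\,\tau_{F_i^j}\,g^{-1}$ up to strong equivalence. Since the $F_i^j$ are finitely many, it suffices to fix one of them, write $\tau$ for its hyper-elliptic involution, and show that the set $\{g\tau g^{-1}\mid g\in\mcg_0(M)\}$ contains at most one element strongly equivalent to $\tau_L$; equivalently, after identifying strong equivalence classes, the map $g\mapsto g\tau g^{-1}$ takes at most one value equal to $\tau_L$ on $\mcg_0(M)$. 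The natural way to pin this down is to understand $\mcg_0(M)$ explicitly. By the exact sequence (\ref{exact}) (from \cite[Theorem 15.1]{Bon}), $\mcg(M)$ is an extension of the subgroup $\Delta\le\mcg(M_1)\times\mcg(M_2)$ by $\D$; restricting to elements trivial on $M_2$ gives that $\mcg_0(M)$ is an extension of (the image in $\mcg(M_1)$ of) the subgroup of $\Delta$ of the form $(f_1,\id)$ by $\D$. Here $\D=\langle D_\lambda\rangle\cong\Z$ by Lemma \ref{lem-d} (1), and $\mcg(M_1)$ for $M_1\in D[3]$ is computed from the split Birman-type sequence exactly as in Lemma \ref{lem-mcg-sfs2}; the relevant free subgroup $\langle D_1,D_2,D_3\rangle$ (Dehn twists along the saturated tori) is free, and the finite part is generated by the involutions $g_i$.

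First I would record that any $g\in\mcg_0(M)$ has the form $g=D_\lambda^{\,a}\,\tilde f$, where $\tilde f$ extends an element $f_1\in\mcg(M_1)$ by the identity on $M_2$, and that $\tau_{F_i^j}$ itself restricts on $M_1$ to one of the standard involutions $g_1,g_2,b$ (or a product) from Lemma \ref{lem-mcg-sfs2} and on $M_2$ to the hyper-elliptic (strong) involution $\iota_K$ of the $2$-bridge knot exterior, while on the gluing region $T\times[1,2]$ it is the obvious involution; this is the structure recalled in Definition \ref{def-homeo}. The key computation is then $g\,\tau_{F_i^j}\,g^{-1}$: on $M_2$ this restricts to $\iota_K$ (since $g|_{M_2}=\id$), so the only freedom is in the $M_1$-part and in the "twisting" contribution from $D_\lambda^{\,a}$. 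Conjugation by $D_\lambda$ changes $\tau_{F_i^j}$ by $D_\lambda^{\,2a}$ because $\tau_{F_i^j}D_\lambda\tau_{F_i^j}=D_\lambda^{-1}$ (the hyper-elliptic involution reverses the longitude $\lambda$, as in Figure \ref{fig-case2-1}); and conjugation by the $M_1$-part $f_1$ changes $\tau_{F_i^j}|_{M_1}$ within $\mcg(M_1)$. So the class of $g\tau_{F_i^j}g^{-1}$ in $\mcg(M)$ is determined by a pair: an element of the coset $\mcg^0(M_1)$-part computed modulo the action of the involutions (which has only finitely many possibilities, corresponding to the finitely many conjugates of $g_1,g_2,b$ under $\langle D_1,D_2,D_3\rangle$ up to strong equivalence), together with the integer $2a\in 2\Z$ recording the $D_\lambda$-twist. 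For this to equal $\tau_L$, the $\D$-component must match a single fixed value, which forces $2a$, hence $a$, and then the $M_1$-component forces $f_1$ up to the kernel of the action — i.e. up to finitely many choices, all giving strongly equivalent involutions. Combining these, at most one strong equivalence class arises, which is the Claim.

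The main obstacle, I expect, is the bookkeeping in $\mcg(M_1)$ for $M_1\in D[3]$: one must be careful that conjugating the standard involution $g_i$ (or $b$) by an arbitrary product of the free Dehn twists $D_1,D_2,D_3$ can a priori produce infinitely many distinct involutions of $M_1$, not finitely many. The resolution is that we only care about the induced involution of the glued manifold $M$ up to strong equivalence, and the Dehn-twist part lies in $\mcg_0(M)$, so conjugation by it does not change the strong equivalence class of the resulting involution of $M$ — the ambiguity is absorbed. Concretely: if $g,g'\in\mcg_0(M)$ give $g\tau_{F_i^j}g^{-1}=g'\tau_{F_i^j}g'^{-1}$ strongly equivalent to $\tau_L$, then $g^{-1}g'$ centralizes $\tau_{F_i^j}$ up to something isotopic to the identity, and one checks using the structure of $\mcg_0(M)$ (the $\D\cong\Z$ on which $\tau_{F_i^j}$ acts by inversion has trivial fixed part except $0$, and the finite $M_1$-part contributes only finitely many strong equivalence classes which coincide) that there is genuinely a unique class. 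Once this centralizer/inversion computation is in place the Claim follows; the remaining cases of Theorem \ref{thm-main} are then handled by the analogous analysis using the corresponding parts of Lemma \ref{lem-d} and Lemma \ref{lem-mcg-sfs2}.
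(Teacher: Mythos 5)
Your overall strategy is the same as the paper's: use $\tau_{g(F_i^j)}=g\tau g^{-1}$, the exact sequence $1\to\D\to\mcg_0(M)\to\mcg_0(M_1)\to 1$, the relation $\tau D_{\lambda}\tau=D_{\lambda}^{-1}$ to kill the $\D$-component, and then control the $M_1$-component. The $\D$-part of your argument is fine. But the $M_1$-part contains a genuine error and a missing ingredient. The error is your ``resolution'' of the bookkeeping obstacle: you assert that conjugation by the Dehn-twist part of $\mcg_0(M)$ ``does not change the strong equivalence class of the resulting involution of $M$.'' This is false. Strong equivalence only permits conjugation by homeomorphisms isotopic to the identity; a nontrivial mapping class in $\mcg_0(M)$ (e.g.\ a Dehn twist along a saturated torus in $M_1$, or $D_{\lambda}$ itself) is not isotopic to the identity, and conjugating $\tau$ by it generally changes the strong equivalence class. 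Indeed, if your assertion were true, every surface $g(F_i^j)$ would have the same hyper-elliptic involution $\tau_L$ and Case 2.\ref{case2-2} could never produce a contradiction; the whole point is that these conjugates are pairwise inequivalent.

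The missing ingredient is precisely what makes them pairwise inequivalent: the triviality of the centralizer of $\tau$ in $\mcg_0(M_1)$. For $M_1\in D[3]$ one has $\mcg_0(M_1)\cong P_3/\langle (xy)^3\rangle$ (an infinite group, not the ``finite part generated by the involutions $g_i$'' as your sketch suggests), and the paper imports from \cite[Claim 1 (2)]{Jan2} the nontrivial group-theoretic fact that $Z(\tau,\mcg_0(M_1))=\{1\}$. Combined with the exact sequence and the identity $D_{\lambda}\tau D_{\lambda}^{-1}=D_{\lambda}^{2}\tau$, this yields $Z(\tau,\mcg_0(M))=\{1\}$, whence $g\tau g^{-1}=g'\tau g'^{-1}$ (up to strong equivalence) forces $[g]=[g']$ and the surfaces $g(F_i^j)$, $g'(F_i^j)$ are isotopic. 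Your proposal replaces this centralizer computation with an unjustified finiteness claim (``only finitely many strong equivalence classes which coincide''), which even if true would only bound the number of strong equivalence classes of involutions, not the number of isotopy classes of surfaces carrying $\tau_L$. Without the centralizer fact the argument does not close.
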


\begin{proof}
We show this only for $F_1^0$. (The other cases can be treated similarly.)
Put $\tau:=\tau_{F_1^0}$.
Then $\tau_{g(F_1^0)}=g\tau g^{-1}$.
Recall by \cite[Proof of Theorem 2 (3)]{Jan2} that
$$
\mcg(M_1)\cong (P_3/\langle (xy)^3\rangle)\rtimes \langle \tau\rangle
< (B_3/\langle (xy)^3\rangle)\rtimes \langle \tau\rangle,
$$
where $P_3$ and $B_3$ are the pure 3-braid group and the 3-braid group, respectively.
Let $\mcg_0(M_1)$ be the subgroup of $\mcg(M_1)$ consisting of all elements whose restrictions to $T$ are the identity.
Then we have $\mcg_0(M_1)\cong P_3/\langle (xy)^3\rangle$.
Recall from \cite[Theorem 15.1]{Bon} and \cite{Sak3} (cf. \cite[Section 5]{Jan2})
that there is an exact sequence
$$
1\rightarrow \D\rightarrow \mcg(M)\rightarrow \Delta\rightarrow 1,
$$
where $\Delta$ is the subgroup of $\mcg(M_1)\times \mcg(M_2)$ consisting of all elements $(f_1,f_2)$
such that $f_1|_T$ is isotopic to $f_2|_T$.
Since $\mcg_0(M)$ is the subgroup of $\mcg(M)$ consisting of all elements whose restrictions to $M_2$ are the identity,
%Then %, from the above exact sequence, 
we obtain an exact sequence
$$
1\rightarrow \D\rightarrow \mcg_0(M)\rightarrow \mcg_0(M_1)\rightarrow 1.
$$
Recall from \cite[Claim 1 (2)]{Jan2} that 
the \lq\lq centralizer\rq\rq\  $$Z(\tau,\mcg_0(M_1))=\{f\in\mcg_0(M_1)\mid f\tau=\tau f\}$$ of $\tau$ in $\mcg_0(M_1)$ is $\{1\}$.
By using this fact, the identity $D_{\lambda}\tau D_{\lambda}^{-1}=D_{\lambda}^{2}\tau$ and Lemma \ref{lem-d} (1),
we can see that the \lq\lq centralizer\rq\rq\  $Z(\tau,\mcg_0(M))=\{f\in\mcg_0(M)\mid f\tau=\tau f\}$ of $\tau$ in $\mcg_0(M)$ is $\{1\}$.
This implies that 
the hyper-elliptic involution associated with $g(F_1^0)$ 
is strongly equivalent to $\tau_L$ for at most one $g\in\mcg_0(M)$.
\end{proof}
\vspace{2mm}

Hence, the number of genus-2 Heegaard surfaces of $M$ whose hyper-elliptic involutions are $\tau_L$ 
is at most twelve,
a contradiction.

\begin{subcase2}\label{case2-3-1-1}
$M$ satisfies the condition (M3-1-1).
\end{subcase2}

Recall that $M$ is obtained by gluing $M_1\in M\ddot{o}[r]$ $(r=1,2)$ and $M_2=E(K)$, where $K$ is a $(2,n)$-torus knot,
so that the regular fiber of $M_1$ is identified with the meridian loop of $K$.
By Theorem \ref{thm-kob}, for any genus-2 Heegaard surface $F$, 
$F\cap M_1$ is the union of two essential saturated annuli which cuts $M_1$ into two solid tori
and $F\cap M_2$ is a 2-bridge sphere of $K$.

Assume first that $r=1$.
Note that $M_1$ contains a unique essential saturated annulus up to isotopy
and that $K$ admits a unique 2-bridge sphere up to isotopy preserving $K$ (see \cite[Theorem 4]{Mor}).
Let $\mu$ and $\lambda$ be the meridian and a longitude of $K$, respectively.
Let $F_0$ be a genus-2 Heegaard surface of $M$.
Then, by \cite[Lemma 6 (2)]{Jan2},
any genus-2 Heegaard surface $F$ of $M$ is isotopic to 
$D_{\lambda}^{n/4}(F_0)$ for some integer $n$.
Note that $\D$ is the infinite cyclic group generated by $D_{\lambda}$ (see \cite[Lemma 3]{Jan2}).
Hence, by an argument similar to that in the previous cases, 
we see that the number of genus-2 Heegaard surfaces whose hyper-elliptic involutions are $\tau_L$ is finite, a contradiction.

Assume first that $r=2$.
Pick a \lq\lq standard\rq\rq\  genus-2 Heegaard surface $F_0$ of $M$,
such that $F_0\cap M_1$ is preserved by the homeomorphisms $g_1$, $g_2$ and $b$ in Lemma \ref{lem-mcg-sfs2} (1).
Then we may assume that $\tau_{F_0}(=\tau_L)=G_2$.
By Theorem \ref{thm-mcg} (1) and \cite[Lemma 6 (2)]{Jan2},
any genus-2 Heegaard surface $F$ of $M$ is isotopic to $D_1^{n_1}D_2^{n_2}\cdots D_1^{n_{2m-1}}D_2^{n_{2m}}(F_0)$
for some integers $n_1, n_2,\dots,n_{2m-1}$ and $n_{2m}$,
where $n_2,\dots,n_{2m-1}$ are nonzero.
Then $$\tau_F=D_1^{n_1}D_2^{n_2}\cdots D_1^{n_{2m-1}}D_2^{n_{2m}}G_2D_2^{-n_{2m}}D_1^{n_{2m-1}}\cdots D_2^{-n_2}D_1^{-n_1}.$$
Since $G_2D_1G_2=D_2^{-1}$ by Remark \ref{rmk-mcg},
we have $$\tau_F=D_1^{n_1}D_2^{n_2}\cdots D_1^{n_{2m-1}}D_2^{n_{2m}}D_1^{n_{2m}}D_2^{n_{2m-1}}\cdots D_1^{n_2}D_2^{n_1}G_2.$$
By Theorem \ref{thm-mcg} (1), we can see that $\tau_F=\tau_{F_0}$ implies $n_1=n_2=0$,
which means $F$ is isotopic to $F_0$.
This contradicts the assumption.

\begin{subcase2}\label{case2-3-1-2}
$M$ satisfies the condition (M3-1-2).
\end{subcase2}

Note that $M$ is obtained by gluing $M_1\in M\ddot{o}[r]$ $(r=0,1,2)$ and $M_2=E(K)$, 
where $K=S(p,q)$ is a hyperbolic 2-bridge knot,
so that the regular fiber of $M_1$ is identified with the meridian loop of $K$.
We may assume that $q$ is odd.
Let $F$ be a genus-2 Heegaard surface of $M$.
By Theorem \ref{thm-kob},
$F\cap M_1$ is the union of two essential saturated annuli which cuts $M_1$ into two solid tori
and $F\cap M_2$ is a 2-bridge sphere of $K$. 
Note that $M_1/\langle \tau_F\rangle$ is a solid torus and
that the image of $\fix\tau_F\cap M_1$ forms a link in it
as illustrated in Figure \ref{fig-quotient1}.
%
%%%%%%%%%%%%%
\begin{figure}
\begin{center}
\includegraphics*[width=8.3cm]{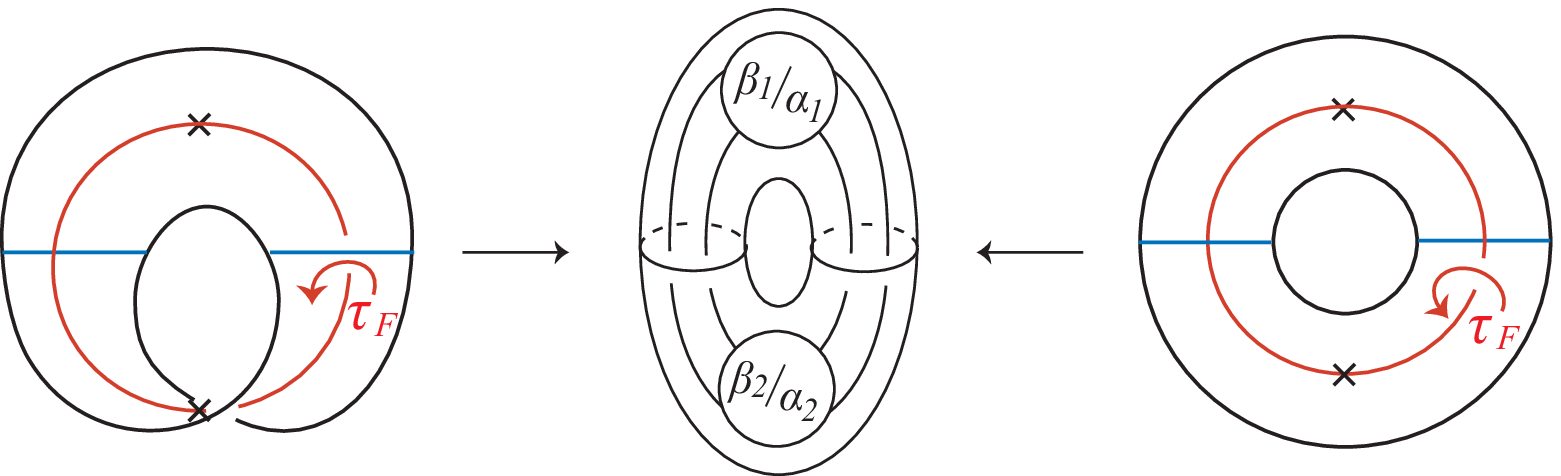}
\end{center}
\caption{}
\label{fig-quotient1}
\end{figure}
%%%%%%%%%%%%%
%
On the other hand, $M_2/\langle \tau_F\rangle$ is also a solid torus
and the image of $\fix\tau_F\cap M_2$ forms a knot in it
such that its exterior in $M_2$ is the exterior of a 2-bridge link $S(2p,q)$ in $S^3$ (cf. \cite[Lemma 3.2]{Jan}).
Since the meridian and the longitude of the solid torus $M_1/\langle \tau_F\rangle$ are identified with the longitude and the meridian of the solid torus $M_2/\langle \tau_F\rangle$, respectively,
we see that $L$ is equivalent to $L(q/2p;\beta_1/\alpha_1,\beta_2/\alpha_2)$.
Moreover, since $K=S(p,q)$ is a hyperbolic 2-bridge knot,
we have $q\not\equiv \pm 1\pmod{p}$ by \cite{Men}.

First assume that $r=0$.
Note that $M_1$ has a unique essential saturated annulus up to isotopy and
that $K$ admits a unique 2-bridge sphere up to isotopy.
Let $F_0$ be the pre-image of $P^0$ given in \cite{Jan} (cf. Figure \ref{fig-3b-inf})
by the covering map $M\rightarrow S^3$ (branched over $L$).
By an argument similar to that in the previous cases,
$F$ is isotopic to $D_{\lambda}^{n/4}(F_0)$ for some integer $n$,
where $D_{\lambda}$ is the Dehn twist along a component of $T=\partial M_1=\partial M_2$
in the direction of a longitude of $K$.
However, $F=D_{\lambda}^{n/4}(F_0)$ is isotopic to $F_0$ since $D_{\lambda}^{n/4}(F_0)\cap M_1$ can be isotoped to $F_0\cap M_1$ by an isotopy fixing the boundary of $M_1$ as illustrated in Figure \ref{fig-isotopy}. 
Thus $M$ admits a unique genus-2 Heegaard surface up to isotopy, a contradiction.
%
%%%%%%%%%%%%%
\begin{figure}
\begin{center}
\includegraphics*[width=6cm]{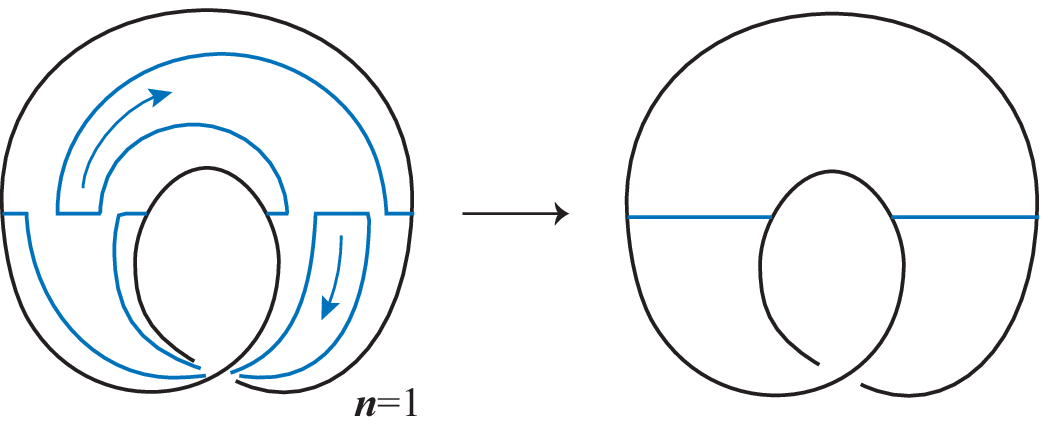}
\end{center}
\caption{}
\label{fig-isotopy}
\end{figure}
%%%%%%%%%%%%%
%

Assume that $r=1$ or $r=2$.
Then, we see by \cite{Jan}
that 
$L$ is equivalent to a link $L(q/2p;\beta_1/\alpha_1,\beta_2/\alpha_2)$ in Figure \ref{fig-3b-inf} 
and that
$L$ admits infinitely many 3-bridge spheres $\{P^i\}_{i\in\Z}$ up to isotopy.
(Moreover, we can see that any 3-bridge sphere is isotopic to $P^i$ for some $i\in\Z$
by using an argument similar to that in the previous cases and by Lemma \ref{lem-mcg-sfs2}.)

\begin{subcase2}\label{case2-3-2}
$M$ satisfies the condition (M3-2-1) or (M3-2-2).
\end{subcase2}

Note that $M$ is obtained by gluing $M_1\in A[r]$ $(r=0,1,2)$ and $M_2=E(K)$, 
where $K$ is a 2-bridge link,
so that the regular fiber of $M_1$ is identified with the meridian loop of $K$.
Let $F$ be a genus-2 Heegaard surface of $M$.
By Theorem \ref{thm-kob},
$F\cap M_1$ is the union of two essential saturated annuli which cuts $M_1$ into two solid tori
and $F\cap M_2$ is a 2-bridge sphere of $K$. 
(If $M_1$ is homeomorphic to a 2-bridge knot exterior, then $F$ can intersect each $M_i$ so that $F\cap M_1$ is a 2-bridge sphere and $F\cap M_2$ is the union of two essential saturated annuli.)
Pick a \lq\lq standard\rq\rq\  genus-2 Heegaard surface $F_0$ of $M$ and assume that $\tau_{F_0}=\tau_L$.
By using an argument similar to that in the previous cases,
we see the following hold.
\begin{itemize}
\item 
If $M$ satisfies the condition (M3-2-1), where $r=0$ or $1$, 
then $F$ is isotopic to $D_{\lambda}^{n/2}(F_0)$ for some integer $n$,
where $\lambda$ is a longitude or a meridian of $K$ according as $F_0$ meets $T:=\partial M_1=\partial M_2$ in a meridian or a longitude of $K$.
Note that the subgroup of $\D$ generated by $D_{\lambda}$ is finite or an infinite cyclic group.
If the subgroup is finite, then $M$ admits only finitely many genus-2 Heegaard surfaces up to isotopy, a contradiction.
If the subgroup is an infinite cyclic group, then, by an argument similar to that in Cases 2.\,\ref{case2-1} together with Lemma \ref{lem-d} (4), we see that the number of genus-2 Heegaard surfaces of $M$ whose hyper-elliptic involutions are $\tau_L$ is finite, a contradiction.
\item 
If $M$ satisfies the condition (M3-2-1), where $r=2$, 
then we see that $F$ is isotopic to $D_l^{n_0/2}D_1'^{n_1}D_2'^{n_2}\cdots D_1'^{n_{2m-1}}D_2'^{n_{2m}}(F_0)$ for some integers $n_i$ ($i=0,1,\dots, 2m$) by using Theorem \ref{thm-mcg} (2). 
By an argument similar to that in Case 2.\,\ref{case2-3-1-1} together with Theorem \ref{thm-mcg} (2), we see that $\tau_F=\tau_{F_0}(=\tau_L)$ implies $n_i=0$ for all $i=0,1,\dots, 2m$. 
Hence, $F$ is isotopic to $F_0$, a contradiction.
\item 
If $M$ satisfies the condition (M3-2-2), where $r=0$,
then $F$ is isotopic to $D_{\lambda}^{n/2}(F_0)$ for some integer $n$,
where $\lambda$ is a longitude of $K$.
By an an argument similar to that in Cases 2.\,\ref{case2-1} together with Lemma \ref{lem-d} (5),
we see that the number of genus-2 Heegaard surfaces of $M$ whose hyper-elliptic involutions are $\tau_L$ is finite, a contradiction.
\item 
If $M$ satisfies the condition (M3-2-2), where $r=1$ or $2$,
then we see by \cite{Jan} that
the link $L$ is equivalent to a link $L(q/2p;\beta_1/\alpha_1,\beta_2/\alpha_2)$ in Figure \ref{fig-3b-inf} and that
$L$ admits infinitely many 3-bridge spheres $\{P^i\}_{i\in\Z}$ up to isotopy.
\end{itemize}

\begin{subcase2}\label{case2-4}
$M$ satisfies the condition (M4).
\end{subcase2}

Note that $M$ is obtained by gluing $M_1, M_2\in D[2]$ and $M_3=E(K_1\cup K_2)$, where $K_1\cup K_2$ is a hyperbolic 2-bridge link with components $K_1$ and $K_2$,
so that the regular fiber of $M_i$ is identified with the meridian loop of $K_i$ ($i=1,2$).
Recall from Theorem \ref{thm-kob} that,
for any genus-2 Heegaard surface $F$ of $M$,
$F\cap M_i$ ($i=1,2$) is an essential saturated annulus in $M_1$ 
and $F\cap M_3$ is the 2-bridge sphere of $K_1\cup K_2$.
Let $D_{\mu_i}$ and $D_{\lambda_i}$ $(i=1,2)$ denote the Dehn twists along $T=\partial M_1=\partial M_2$ in the direction of the meridian and a longitude of $K_i$, respectively. 
Note that $K_1\cup K_2$ admits a unique 2-bridge sphere up to isotopy by \cite{Sch},
and that $M_i$ contains a unique essential annulus up to isotopy.
Let $F_0$ be a \lq\lq standard\rq\rq\  genus-2 Heegaard surface of $M$.
Then, by \cite[Lemma 6 (1)]{Jan2},
any genus-2 Heegaard surface $F$ of $M$ is isotopic to 
$D_{\lambda_1}^{n_1/2}D_{\lambda_2}^{n_2/2}(F_0)$
for some integers $n_1$ and $n_2$ by \cite[Lemma 6 (1)]{Jan2}.
Since $\tau_{F_0}D_{\lambda_i}\tau_{F_0}=D_{\lambda_i}^{-1}$,
we have, by \cite[Lemma 5]{Jan2}, 
$$\tau_{D_{\lambda_1}^{n_1/2}D_{\lambda_2}^{n_2/2}(F_0)}
=D_{\lambda_1}^{n_1}D_{\lambda_2}^{n_2}\tau_{F_0}.$$
Since $\D=\langle D_{\lambda_1}, D_{\lambda_2}\rangle\cong\Z^2$ (see Lemma \ref{lem-d} (6)),
we have $D_{\lambda_1}^{n_1}D_{\lambda_2}^{n_2}\tau_{F_0}=\tau_{F_0}$ if and only if $n_1=n_2=0$.
Hence, $M$ admits a unique genus-2 Heegaard surface whose hyper-elliptic involution is $\tau_L$, a contradiction.

This completes the proof of Theorem \ref{thm-main}.

%%%%%%%%%%%%%%%%%%%%%%%%%%%%%%%%%%%%%%%%%%%%%
\section{Proof of Proposition \ref{prop-toruslink}}\label{sec-toruslink}

Let $L$ be a torus link $T(3,3n')$ for some nonzero integer $n'$, and
let $K_1$, $K_2$ and $K_3$ be the three components of $L$.
Let $S$ be a 3-bridge sphere of $L$.
Let $T$ be the standard torus in $S^3$ containing $L$,
and let $A_i$ $(i=1,2,3)$ be the closure of a component of $T\setminus L$ 
bounded by two components of $L$ different from $K_i$.
Let $V_1$ and $V_2$ be the two solid tori in $S^3$ bounded by $T$
such that the meridians of $V_1$ and $V_2$ meet $L$ in three points and $|3n'|$ points, respectively.
Since $S\cap K_i$ consists of two points for each $i=1,2,3$,
$S\cap A_i$ satisfies one of the following conditions (see Figure \ref{fig-int}).
\begin{itemize}
\item[(i)] $S\cap A_i$ contains two non-separating arcs $\gamma_i^1$ and $\gamma_i^2$.
\item[(ii)] $S\cap A_i$ contains two separating arcs $\gamma_i^1$ and $\gamma_i^2$.
\end{itemize}
%
%%%%%%%%%%%%%
\begin{figure}
\begin{center}
\includegraphics*[width=6cm]{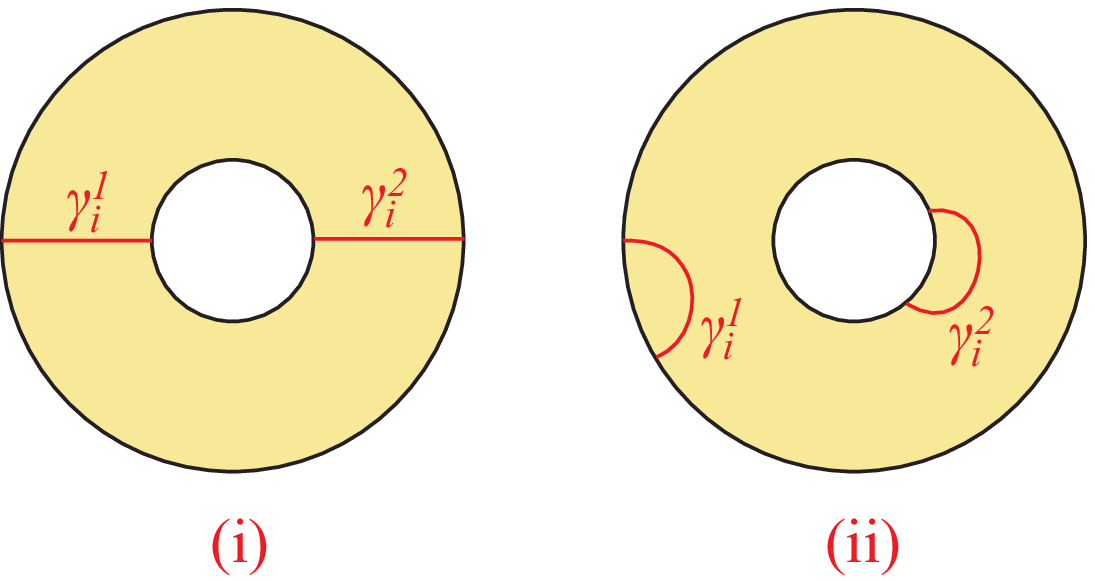}
\end{center}
\caption{}
\label{fig-int}
\end{figure}
%%%%%%%%%%%%%
%
Thus one of the following holds.
\begin{itemize}
\item[(S1)] $S\cap A_i$ satisfies the condition (i) for every $i=1,2,3$.
\item[(S2)] Two of $S\cap A_1$, $S\cap A_2$ and $S\cap A_3$ satisfies the condition (i) and the other satisfies the condition (ii).
\item[(S3)] Two of $S\cap A_1$, $S\cap A_2$ and $S\cap A_3$ satisfies the condition (ii) and the other satisfies the condition (i).
\item[(S4)] $S\cap A_i$ satisfies the condition (ii) for every $i=1,2,3$.
\end{itemize}

Assume that $S$ satisfies the condition (S1).
Note that $\gamma:=\gamma_1^1\cup\gamma_1^2\cup\gamma_2^1\cup\gamma_2^2\cup\gamma_3^1\cup\gamma_3^2$
consists of two loops each of which contains one of the two points $S\cap K_i$ for every $i=1,2,3$.
Suppose there is a loop component, $\delta$, of $S\cap T$ other than $\gamma$.
Then $\delta$ bounds a disk in the interior of $A_i$ disjoint from $\gamma$ for some $i=1,2,3$,
and hence, $\delta\cup K_i$ is a trivial 2-component link for each $i=1,2,3$.
On the other hand, $\delta$ either bounds a disk in $S\setminus \gamma$ 
or is isotopic to the core of the annulus component of $S\setminus \gamma$.
In the latter case, the linking number of $\delta$ and a component of $L$ is $1$
(see Figure \ref{fig-int2}), a contradiction.
%
%%%%%%%%%%%%%
\begin{figure}
\begin{center}
\includegraphics*[width=3.3cm]{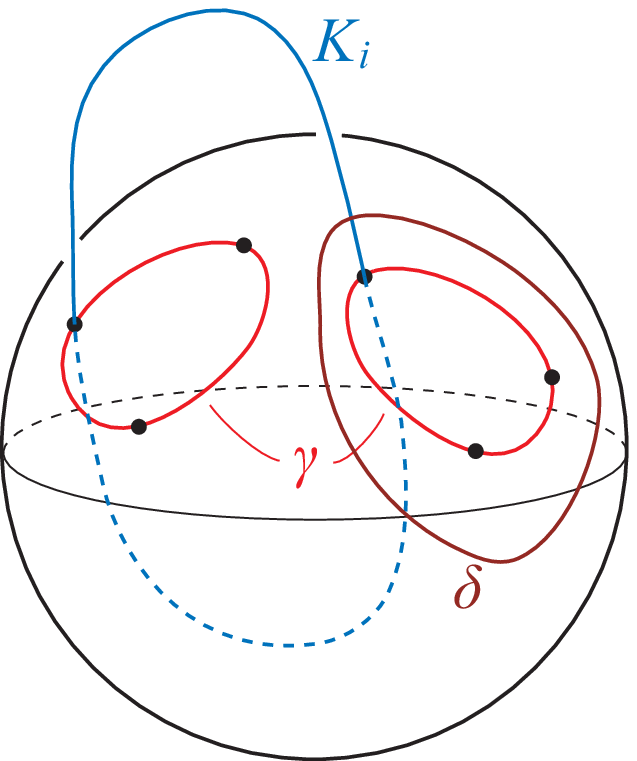}
\end{center}
\caption{}
\label{fig-int2}
\end{figure}
%%%%%%%%%%%%%
%
Hence, $\delta$ bounds a disk in $S\setminus \gamma$.
If we assume that $\delta$ is innermost (in $S\setminus \gamma$),
then we can eliminate it from $S\cap T$ 
since the union of the two disks described above is a 2-sphere bounding a 3-ball in $S^3\setminus L$.
Hence, we may assume that $S\cap T=\gamma$.
Since $\gamma$ cuts $S$ into two disks and an annulus,
it bounds two disks in $V_1$ and bounds an annulus in $V_2$.
Since such a disk is unique up to isotopy in $(V_1,L)$
and an annulus is unique up to isotopy in $(V_2,L)$, 
$L$ admits a unique 3-bridge sphere satisfying the condition (S1).
(We obtain two 3-bridge spheres when $n'=\pm 1$, but it can be easily seen that they are isotopic.)

Assume that $S$ satisfies the condition (S2).
Note that $\gamma:=\gamma_1^1\cup\gamma_1^2\cup\gamma_2^1\cup\gamma_2^2\cup\gamma_3^1\cup\gamma_3^2$
is a loop containing the six points $S\cap L$.
Thus any loop component of $S\cap A_i$ except $\gamma$ bounds a disk in $S\setminus\gamma\subset S^3\setminus L$.
This implies that 
any loop component of $S\cap A_i$ cannot be a core of the annulus $A_i$ for each $i=1,2,3$,
since the linking number of the core of $A_i$ and a component of $L$ is $n'(\neq 0)$.
Hence, any loop component of $S\cap A_i$ also bounds a disk in the interior of $A_i$ disjoint from $\gamma$,
and hence, we can remove all such components by an isotopy.
Thus we may assume that $S\cap T$ consists of only one loop component $\gamma$.
Since $\gamma$ itself bounds disks in $S$ on both sides, 
it must be an inessential loop on $T$.

We show that this case can be reduced to the case where $S$ satisfies the condition (S1).
To this end, let $h:S^3\rightarrow [-2,2]$ be the height function 
such that 
$S_t:=h^{-1}(t)$ is a 3-bridge sphere of $L$ when $-1<t<1$,
that $S_t$ is a 2-sphere which meets each $K_i$ in one point when $t=\pm 1$,
that $S_t$ is a single point when $t=\pm 2$
and that $S_t$ is a 2-sphere in $S^3\setminus L$ otherwise.
Moreover, we may assume that $S_{0}=S$
and that the restriction $g:=h|_{T}$ of $h$ to $T$ has 
at most one non-degenerate singular point
at every level.
Thus, for every singular value $t_0$, 
$g^{-1}({t_0})$ contains a maximal point, a minimal point or a saddle point.
We represent each saddle point in $g^{-1}({t_0})$ by an arc on $T$
with endpoints on $g^{-1}({t_0-\varepsilon})$ 
for sufficiently small $\varepsilon >0$, as in Figure \ref{fig-saddle}.
%
%%%%%%%%%%%%
\begin{figure}
\begin{center}
\includegraphics*[width=6cm]{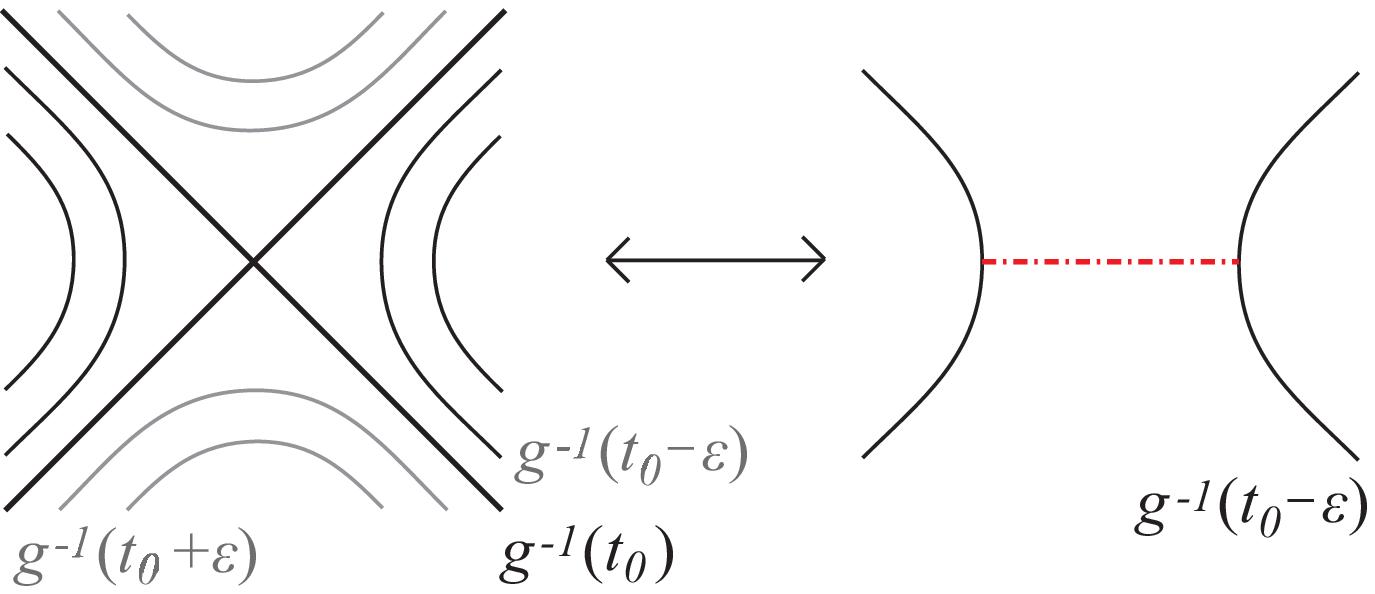}
\end{center}
\caption{}
\label{fig-saddle}
\end{figure}
%%%%%%%%%%%%
%
Such an arc, $\alpha$, is of one of the following three types 
(see Figure \ref{fig-type}):
%
%%%%%%%%%%%%
\begin{figure}
\begin{center}
\includegraphics*[width=7cm]{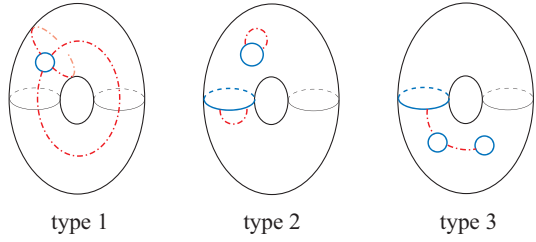}
\end{center}
\caption{The dashed and dotted lines give all possible types of an arc $\alpha$ representing a saddle point of $g$.}
\label{fig-type}
\end{figure}
%%%%%%%%%%%%
%
\begin{itemize}
\item
$\alpha$ is of {\it type 1} 
if its endpoints are on the same component of $g^{-1}({t_0-\varepsilon})$, 
and $g^{-1}({t_0+\varepsilon})$ contains a pair of parallel essential loops on $T$,
\item
$\alpha$ is of {\it type 2} 
if its endpoints are on the same component of $g^{-1}({t_0-\varepsilon})$, 
and $g^{-1}({t_0+\varepsilon})$ does not contain a pair of parallel essential loops on $T$,
and 
\item
$\alpha$ is of {\it type 3} 
if its endpoints are on different components of $g^{-1}({t_0-\varepsilon})$.
\end{itemize}

Put $X_{s}:=g^{-1}([-2,s])$ for any $s\in [-2,2]$. 
Since $S(=S_{0})$ cuts $T$ into a disk and a 1-holed torus,
we may assume that $X_{0}$ is the disk.
Since $L\subset X_1$, we see that $X_1$ contains an essential loop on $T$.
Thus, there exists a singular value $s_0>0$ 
and a sufficiently small $\varepsilon>0$
such that
$X_{s_0-\varepsilon}$ does not contain an essential loop on $T$ and
$X_{s_0+\varepsilon}$ contains an essential loop on $T$.
Note that, if $X_{s_0-\varepsilon}$ does not contain an essential loop on $T$
and the arc representing the singular point at $s_0$ is of type 2 or of type 3,
then $X_{s_0+\varepsilon}$ cannot contain an essential loop on $T$.
Thus, the arc representing the singular point at $s_0$ must be of type 1,
and hence, $g^{-1}({s_0+\varepsilon})$ contains a pair of parallel essential loops, say $c$ and $c'$, on $T$.
Note that $c$ bounds a $k$-holed disk in $S':=S_{s_0+\varepsilon}$ disjoint from $g^{-1}({s_0+\varepsilon})\setminus (c\cup(\cup_{i=1}^{k}c_k))$
together with $k$ components $c_1,c_2,\dots,c_k$ of $g^{-1}({s_0+\varepsilon})\setminus (c\cup c')$ for some non-negative integer $k$.
We see that $c$ is null-homologous in $V_i$ for some $i=1,2$
since $c_1,c_2,\dots,c_k$ are inessential loops on $T$.
%Note that $c$ either bounds a disk in $S':=S_{s_0+\varepsilon}$ disjoint from $g^{-1}({s_0+\varepsilon})\setminus c$
%or bounds a $k$-holed disk in $S'$ disjoint from $g^{-1}({s_0+\varepsilon})\setminus (c\cup(\cup_{i=1}^{k}c_k))$
%together with $k$ components $c_1,c_2,\dots,c_k$ of $g^{-1}({s_0+\varepsilon})\setminus (c\cup c')$ for some positive integer $k$.
%In either case, we see that $c$ is null-homologous in $V_i$ for some $i=1,2$
%since $c_1,c_2,\dots,c_k$ are inessential loops on $T$.
(By the choice of $s_0$, all components of $g^{-1}(s_0+\varepsilon)$ except $c$ and $c'$ are inessential on $T$.)
Hence, $c$ and $c'$ must be meridian loops of one of the solid tori $V_1$ and $V_2$.
Since each of $c$ and $c'$ meets each component of $L$ in a single point,
it intersects each of the annuli $A_1$, $A_2$ and $A_3$ in a non-separating arc.
Hence, the 3-bridge sphere $S'$, which is isotopic to $S$, satisfies the condition (S1).

Similarly, the cases where $S$ satisfies the condition (S3) or (S4) is reduced to the first case.
This implies that $L$ admits a unique 3-bridge sphere up to isotopy.

This completes the proof of Proposition \ref{prop-toruslink}.

\vspace{2mm}

{\bf Acknowledgment}
I would like to express my appreciation to Professor
Makoto Sakuma for his guidance, advice and encouragement.
I would also like to thank 
Professor Susumu Hirose for his helpful comments, especially on Section \ref{sec-mcg}.

\end{document}